\newcommand{\lek}{\left[}
\newcommand{\rek}{\right]}
\newcommand{\prob}{\stackrel{\PP}{\longrightarrow}}
\newcommand{\PP}{\mathbb{P}}
\newcommand{\E}{\mathbb{E}}
\spnewtheorem{assumptions}{Assumption}{\bf}{\it}
\begin{document}

\title{Beyond the Bakushinkii veto: Regularising linear inverse problems without knowing the noise distribution%\thanks{Grants or other notes
%about the article that should go on the front page should be
%placed here. General acknowledgments should be placed at the end of the article.}
}
%\subtitle{Do you have a subtitle?\\ If so, write it here}

\titlerunning{Regularising linear inverse problems without knowing the noise distribution}        % if too long for running head

\author{Bastian Harrach \and
        Tim Jahn        \and
        Roland Potthast 
}

%\authorrunning{Short form of author list} % if too long for running head

\institute{Bastian Harrach  \at
              Institute for Mathematics\\
              Goethe-Universit\"at Frankfurt \\
              Frankfurt am Main\\
              \email{harrach@math.uni-frankfurt.de}\\              
              \and
              Tim Jahn \at
              Institute for Mathematics\\
              Goethe-Universit\"at Frankfurt \\
              Frankfurt am Main\\
              Tel.: +49-69-79822715\\
              \email{jahn@math.uni-frankfurt.de}\\
                \and
              Roland Potthast  \at
              Data Assimilation Unit\\
              Deutscher Wetterdienst\\
              Offenbach am Main\\
              \email{Roland.Potthast@dwd.de}\\         %  \\
%             \emph{Present address:} of F. Author  %  if needed
%           \and
%           S. Author \at
%              second address
}

\date{Received: date / Accepted: date}
% The correct dates will be entered by the editor

\maketitle

\begin{abstract}
This article deals with the solution of linear ill-posed equations in Hilbert spaces.
Often, one only has a corrupted measurement of the right hand side at hand and the Bakushinskii veto tells us, that we are not able to solve
the equation if we do not know the noise level. But in applications it is ad hoc unrealistic to know the error of a measurement. In practice, the error of a measurement may often be estimated through averaging of multiple measurements. We integrated that in our anlaysis and obtained convergence to the true solution, with the only assumption that the measurements are unbiased, independent and identically distributed according to an unknown distribution.
\keywords{linear inverse problems \and filter based regularisation \and stochastic noise \and discrepancy principle \and optimality}
% \PACS{PACS code1 \and PACS code2 \and more}
% \subclass{MSC code1 \and MSC code2 \and more}
\end{abstract}

\section{Introduction}
\label{intro}
The goal is to solve the ill-posed equation $K\hat{x}=\hat{y}$, where $\hat{x}\in\mathcal{X}$ and $\hat{y}\in\mathcal{Y}$ are elements of infinite dimensional Hilbert spaces and $K$ is either linear and bounded with non-closed range, or more specifically  compact.
 We do not know the right hand side $\hat{y}$ exactly, but we are given several measurements $Y_1,Y_2,...$ of it, which
are independent, identically distributed and unbiased ($\E Y_i = \hat{y}$) random variables. Thus we assume, that we are able to measure the right hand side multiple times, and a crucial requirement is that the solution does not
change at least on small time scales. Let us stress that using multiple measurements to decrease the data error is a standard engineering practice under the name ’signal averaging’, see, e.g., \cite{lyons2004understanding} for an introducing monograph or \cite{hassan2010reducing} for a survey article. Examples with low or moderate numbers of measurements (up to a hundred) can be found in \cite{buades2009note} or \cite{mackay2004high} on image averaging or \cite{garcia2014retracking} on satellite radar measurements.
For the recent first image of a black hole, even up to $10^9$ samples were averaged, cf. \cite{akiyama2019first}.\\
The given multiple measurements naturally lead to an estimator of $\hat{y}$, namely the sample mean

\begin{equation*}
 \bar{Y}_n:=\frac{\sum_{i \le n}Y_i}{n}.
\end{equation*}

\noindent But, in general $K^+\bar{Y}_n \not\to K^+\hat{y}$ for $n\to\infty$, because the generalised inverse (Definition 2.2 of \cite{engl1996regularization}) of $K$ is not continuous.
So the inverse is replaced with a family of continuous approximations $(R_{\alpha})_{\alpha>0}$, called regularisation, e.g. the Tikhonov 
regularisation $R_{\alpha}:=\left(K^*K+\alpha Id\right)^{-1}K^*$, where $Id:\mathcal{X}\to\mathcal{X}$ is the identity. 
The regularisation parameter $\alpha$ has to be chosen accordingly to the data $\bar{Y}_n$ and the true data error 

\begin{equation*}
 \delta_n^{true}:=\| \bar{Y}_n-\hat{y}\|,
\end{equation*}

\noindent which is also a random variable. Since $\hat{y}$ is unknown, $\delta_n^{true}$ is also unkown and has to be guessed. Natural guesses are

\begin{equation*}
\delta_n^{est}:= \frac{1}{\sqrt{n}}\quad \mbox{or} \quad \delta_n^{est}:=\frac{\sqrt{\sum_{i\le n}\| Y_i -\bar{Y}_n\|^2/(n-1)}}{\sqrt{n}}.
\end{equation*}

\noindent  One first natural approach is now to use a (deterministic) regularisation method together with $\bar{Y}_n$ and $\delta_n^{est}$.
We are in particular interested in the discrepancy principle \cite{morozov1968error}, wich is known to provide optimal convergence rates (for some $\hat{y}$) in the classical deterministic setting. The following main result states, that in a certain sense, the natural approach converges and yields the optimal deterministic rates asymptotically. 

\begin{corollary}[to Theorem \ref{thwos} and \ref{thws}]\label{cor1}
 Assume that $K:\mathcal{X}\to \mathcal{Y}$ is a compact operator with dense range between Hilbert spaces and that $Y_1,Y_2,...$ are i.i.d. $\mathcal{Y}-$valued random variables which fullfill $\E[ Y_1] = \hat{y}\in \mathcal{R}(K)$ and $0<\E\lVert Y_1-\hat{y}\rVert^2<\infty$. Define the Tikhonov regularisation $R_{\alpha}:=\left(K^*K+\alpha Id\right)^{-1}K^*$ (or the truncated singular value regularisation, or Landweber iteration). Determine $(\alpha_n)_n$ through the discrepancy principle using $\delta_n^{est}$ (see Algorithm $1$). Then $R_{\alpha_n}\bar{Y}_n$ converges to $K^+\hat{y}$ in probability, that is

\begin{equation*}
\mathbb{P}\left(\lVert R_{\alpha_n}\bar{Y}_n-K^+\hat{y}\rVert \le \varepsilon\right)\to 1,\quad n\to\infty,\quad\forall\varepsilon>0.
\end{equation*}

\noindent Moreover, if $K^+\hat{y}=\left(K^*K\right)^{\nu/2}w$ with $w\in\mathcal{X}$ and $\lVert w\rVert\le\rho$ for $\rho>0$ and $0<\nu<\nu_0-1$ (where $\nu_0$ is the qualification of the chosen method, see Assumptions \ref{assumptions1}), then for all $\varepsilon>0$,

\begin{equation*}
\mathbb{P}\left(\lVert R_{\alpha_n}\bar{Y}_n-K^+\hat{y}\rVert\le \rho^\frac{1}{\nu+1}\left( \frac{1}{\sqrt{n}}\right)^{\frac{\nu}{\nu+1}-\varepsilon}\right)\to 1,\quad n\to\infty.
\end{equation*}

\end{corollary}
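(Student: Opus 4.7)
The plan is to reduce both claims to the underlying Theorems \ref{thwos} and \ref{thws} by showing that, with probability tending to one, the random datum $\bar Y_n$ together with the random noise estimate $\delta_n^{est}$ behave like a deterministic noisy datum with noise level of order $1/\sqrt n$.

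First I would collect the basic probabilistic control of the sample mean and the sample variance. Since the $Y_i$ are i.i.d.\ Hilbert-space valued with $\E Y_1=\hat y$ and finite second moment $\sigma^2:=\E\|Y_1-\hat y\|^2\in(0,\infty)$, orthogonality of the centred summands gives
\begin{equation*}
\E\|\bar Y_n-\hat y\|^2 \;=\; \frac{\sigma^2}{n},
\end{equation*}
so by Chebyshev $\sqrt n\,\delta_n^{true}$ is stochastically bounded. Simultaneously the law of large numbers yields $s_n^2\to\sigma^2$ almost surely, so the variance-based $\delta_n^{est}$ is sandwiched between constant multiples of $1/\sqrt n$ with probability tending to one. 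Consequently, for every $\eta>0$ there exist a constant $C_\eta>0$ and events $A_{n,\eta}$ with $\PP(A_{n,\eta})\ge 1-\eta$ for all $n$ large on which simultaneously
\begin{equation*}
\delta_n^{true}\le C_\eta\,\delta_n^{est} \quad\text{and}\quad \delta_n^{est}\le C_\eta/\sqrt n.
\end{equation*}

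Next I would invoke the two underlying theorems. I expect Theorem \ref{thwos} to state that whenever a supplied noise level $\delta$ dominates the actual data error $\|\bar Y-\hat y\|$ up to the $\tau$-factor used in the discrepancy principle and $\delta\to 0$, the regularised reconstruction converges to $K^+\hat y$, and Theorem \ref{thws} to supply, under the source condition $K^+\hat y=(K^*K)^{\nu/2}w$ with $\|w\|\le\rho$, the deterministic filter-regularisation rate $\|R_{\alpha}\bar Y-K^+\hat y\|\le C\rho^{1/(\nu+1)}\delta^{\nu/(\nu+1)}$. Both inputs are satisfied on $A_{n,\eta}$ with $\delta=\delta_n^{est}$: the first theorem gives norm convergence there, and since $\eta$ is arbitrary this yields convergence in probability, proving the first assertion. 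Under the source condition the second theorem gives, on $A_{n,\eta}$,
\begin{equation*}
\|R_{\alpha_n}\bar Y_n-K^+\hat y\| \;\le\; C\rho^{1/(\nu+1)}\bigl(C_\eta/\sqrt n\bigr)^{\nu/(\nu+1)}.
\end{equation*}

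The main obstacle is the final constant-absorption step: the random constant $C_\eta$ blows up as $\eta\downarrow 0$, ruling out a clean $(1/\sqrt n)^{\nu/(\nu+1)}$ rate in probability, which is why the stated rate carries the loss $\varepsilon$ in the exponent. For any prescribed $\varepsilon>0$ one verifies, again via Chebyshev applied to $\E\|\bar Y_n-\hat y\|^2=\sigma^2/n$, that $\PP\bigl(\sqrt n\,\delta_n^{true}\le n^{\varepsilon(\nu+1)/(2\nu)}\bigr)\to 1$, and on this high-probability event the prefactor $C_\eta^{\nu/(\nu+1)}$ can be absorbed into an extra factor $n^{\varepsilon\cdot\nu/(\nu+1)/2}$. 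The only delicate point is to check that this absorption is compatible with the way $\alpha_n$ is produced by Algorithm~1, which is precisely the quantitative content packaged into Theorem \ref{thws}; once that is granted, Corollary \ref{cor1} follows from the reduction above.
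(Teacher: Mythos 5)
Your overall architecture (control $\delta_n^{true}$ and $\delta_n^{est}$ probabilistically, then quote the two theorems, then absorb constants into an $n^{\varepsilon}$ factor) is the right shape, and your final absorption step is essentially identical to the paper's one-line proof, which rests on $\lim_{n\to\infty}\mathbb{P}\bigl(\delta_n^{true}\le n^{\varepsilon'}\delta_n^{est}\le n^{-1/2+\varepsilon}\bigr)=1$. But there is a genuine gap in the middle: you misattribute to Theorems \ref{thwos} and \ref{thws} a content they do not have, and the version you substitute would not suffice. You describe them as deterministic statements that apply ``whenever a supplied noise level $\delta$ dominates the actual data error up to the $\tau$-factor,'' and you then invoke them on events $A_{n,\eta}$ where $\delta_n^{true}\le C_\eta\,\delta_n^{est}$ with $C_\eta$ large. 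The classical deterministic theory of the discrepancy principle requires the supplied noise level to genuinely bound the data error ($\delta_n^{est}\ge\delta_n^{true}$, or $\tau\delta_n^{est}\ge\delta_n^{true}$ for the fixed algorithmic $\tau$); it gives nothing when the noise level is underestimated by an arbitrary constant $C_\eta$, and the paper stresses that in the deterministic setting running the discrepancy principle with $\delta'<\delta$ yields non-convergence in general. Moreover the domination event with the correct constant does \emph{not} have probability tending to one: $\sqrt n\,\delta_n^{est}$ converges almost surely to a constant $\gamma$ while $\sqrt n\,\delta_n^{true}$ converges in distribution to $\lVert Z\rVert$ for a Gaussian $Z$, so $\mathbb{P}(\delta_n^{true}\le\delta_n^{est})\to\mathbb{P}(\lVert Z\rVert\le\gamma)<1$. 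Persistent underestimation of the noise level is the central difficulty of the paper, not a negligible event.

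What actually closes this gap is the probabilistic content of Theorems \ref{thwos} and \ref{thws} themselves, whose proofs exploit the stabilisation of the noise direction via the central limit theorem and a monotonicity property of the filter; in particular Theorem \ref{thws} yields, with probability tending to one, the bound
\begin{equation*}
\lVert R_{\alpha_n}\bar Y_n-K^+\hat y\rVert\le L\,\rho^{\frac{1}{\nu+1}}\max\Bigl\{{\delta_n^{est}}^{\frac{\nu}{\nu+1}},\,{\delta_n^{true}}^{\frac{\nu}{\nu+1}}\bigl(\delta_n^{true}/\delta_n^{est}\bigr)^{\frac{1}{\nu+1}}\Bigr\},
\end{equation*}
whose second term is exactly the penalty for underestimation that your guessed bound omits. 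Once this is the input, your concluding step (bounding $\delta_n^{true}/\delta_n^{est}$ by $n^{\varepsilon'}$ and $\delta_n^{est}$ by $n^{-1/2+\varepsilon''}$ with probability tending to one, and absorbing all constants into the $\varepsilon$-loss in the exponent) is correct and is the paper's argument. As written, however, your proposal reduces the corollary to a deterministic statement that is false in the relevant regime, so the reduction does not stand on its own.
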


\noindent Moreover it is shown, that the approach in general does not yield $L^2$ convergence \footnote{also called convergence of the integrated mean squared error or root mean squared error}
for a naive use of the discrepancy principle, but it does for a priori regularisation. We also discuss quickly, how one has to estimate the error to obtain almost sure convergence.\\
\noindent To solve an inverse problem, as already mentioned, typically some a priori information about the noise is required. This may be, in the classical deterministic case,
the knowledge of an upper bound of the noise level, or, in the stochastic case, some knowledge of the error distribution or the restriction
to certain classes of distributions, for example to Gaussian distributions.
Here we present the first rigorous convergence theory for noisy measurements without any knowledge of the error distribution. The approach can be easily used by everyone, who can measure multiple times.\\
Stochastic or statistical inverse problems are an active field of research with close ties to high dimensional statistics
(\cite{gine2016mathematical},\cite{ghosal2017fundamentals},\cite{nakamura2015inverse}).
In general, there are two approaches to tackle an ill-posed problem with stochastic noise. The Bayesian setting considers the solution of the problem itself as
a random quantity, on which one has some a priori knowledge (see \cite{kaipio2006statistical}). This opposes the frequentist setting, where the inverse problem is
assumed to have a deterministic, exact solution (\cite{cavalier2011inverse},\cite{bissantz2007convergence}). We are working in the frequentist setting, but we stay 
close to the classic deterministic theory of linear inverse problems (\cite{engl1996regularization},\cite{rieder2013keine},\cite{tikhonov1977methods}). 
 For statistical inverse problems, typical methods to determine the regularisation parameter are cross validation \cite{wahba1977practical}, Lepski's balancing principle 
 \cite{mathe2003geometry} or penalised empirical risk minimisation \cite{cavalier2006risk}. Modifications of the discrepancy principle were studied recently
 (\cite{blanchard2012discrepancy},\cite{lu2014discrepancy},\cite{blanchard2018optimal},\cite{lucka2018risk}). In \cite{blanchard2012discrepancy}, it was first shown
 how to obtain optimal convergence in $L^2$ under Gaussian white noise with a modified version of the discrepancy principle.\\
 Another approach is to transfer results from the classical deterministic theory using the Ky-Fan metric, which metrises convergence in probability. In  
(\cite{hofinger2006ill},\cite{gerth2017lifting}) it is shown, how to obtain convergence if one knows the Ky-Fan distance between the measurements and the true data.
Aspects of the Bakushinskii veto \cite{bakushinskii1984remarks} for stochastic inverse problems are discussed in 
(\cite{bauer2008regularization},\cite{becker2011regularization},\cite{werner2018adaptivity}) under assumptions for the noise distribution. In particular,
\cite{becker2011regularization} gives an explicit non trivial example for a convergent regularisation, without knowing the exact error level, under Gaussian white
noise. We extent this to arbitrary distributions here, if one has multiple measurements.\\
 In the articles mentioned above, the error is usually modelled as a Hilbert space process (such as white noise), thus it is impossible to determine the  regularisation parameter
 directly through the discrepancy principle. This is in contrast to our, more classic error model,
 where the measurement is an element of the Hilbert space itself. Under the popular assumption that the operator $K$ is Hilbert-Schmidt, one could in principle extend our results to a general Hilbert space process error model (considering the symmetrised equation $K^*K\hat{x}=K^*\hat{y}$ instead of $K\hat{x}=\hat{y}$, as it is done for example in \cite{blanchard2012discrepancy}). But we will
 postpone the discussion of the white noise case to a follow up paper.\\
To summarise the connection to the Bakushinskii veto let us state the following. The Bakushinskii veto states that the inverse problem can only be solved with a 
deterministic regularisation, if the noise level of the data is known. In this article we show, that if one has access to multiple i.i.d. measurements of an 
unkown distribution, one may use as data the average together with the estimated noise level and one obtains the optimal deterministic rate with high probability, as the number of 
measurements tends to infinity. That is one can estimate the error from the data. Finally, the measurements potentially contain more information, which is not used
here. For example one could estimate the whole covariance structure of one measurement and use this to rescale the measurements and the operator, eventually increasing the relative smoothness of the data. Also one could directly
regularise the non-averaged measurements.\\
In the following section we apply our approach to a priori regularisations and in the main part we consider the widely used discrepancy principle,
which is known to work optimal in the classic deterministic theory. After that we quickly show how to choose $\delta_n^{est}$ to obtain almost sure
convergence and we compare the methods  numerically.

%\noindent Then the probability, that the optimal error bound $\delta_n^{opt}$ holds, tends to $1$ for $n\to\infty$, e.g.

%\begin{equation}
% \mathbb{P}\left(\| R_{\alpha_n}\bar{Y}_n - K^+\hat{y}\| \le C\delta_n^{opt}\right)\stackrel{n\to\infty}{\longrightarrow}1.
%\end{equation}

\section{A priori regularisation}
 We use the usual definition that
  $R_\alpha:\mathcal{Y}\to\mathcal{X}$ is called a linear regularisation, if $R_\alpha$ is
   a bounded linear operator for all $\alpha>0$ and if $R_{\alpha}y\to K^+y$ for $\alpha\to 0$
for all $y\in\mathcal{D}(K^+)$. A regularisation method is a combination of a regularisation  and a parameter choice strategy $\alpha : \mathbb{R}^+ \times \mathcal{Y} \to \mathbb{R}^+$, such that 
$R_{\alpha(\delta,y^{\delta})}y^{\delta} \to K^+y$ for $\delta\to0$, for all $y \in \mathcal{D}(K^+)$ and for all
 $(y^{\delta})_{\delta > 0}\subset \mathcal{Y}$ with $\| y^{\delta} - y \| \le \delta$. The method is called a priori, if the parameter choice does not depend
 on the data, that is if $\alpha(\delta,y)=\alpha(\delta)$.\\
The measurements can be 
formally modelled as realisations of an independent and identically distributed sequence $Y_1,Y_2,...: \Omega \to \mathcal{Y}$ of random variables with values in $\mathcal{Y}$, such that $\E Y_1 =\hat{y}\in \mathcal{D}(K^+)$. Moreover, we
require that $0<\E \| Y_1 \|^2 < \infty$, that is the measurements are (almost surely) in the Hilbert space.\\
In the following we apply the above approach to a priori parameter choice strategies $\alpha(y^{\delta},\delta)=\alpha(\delta)$. We restrict to $\delta_n^{est}=1/\sqrt{n}$ here, that is we do not estimate the variance here (otherwise the parameter choice would depend on the data). Since then $\delta_n^{est}$ and hence $\alpha(\delta_n^{est})$ are deterministic, the situation is very easy here and the results are not surprising (see Remark \ref{remark.2.1}).\\

\begin{theorem}[Convergence of a priori regularisation]\label{apriori}
  Assume that $K:\mathcal{X}\to \mathcal{Y}$ is a bounded linear operator with non-closed range between Hilbert spaces and that $Y_1,Y_2,...$ are i.i.d. $\mathcal{Y}-$valued random variables which fullfill $\E[ Y_1] = \hat{y}\in \mathcal{D}(K^+)$ and $0<\E\lVert Y_1\rVert^2<\infty$.
  Take an a priori regularisation scheme, with 
 $\alpha(\delta) \stackrel{\delta \to 0}{\longrightarrow} 0$ and $\| R_{\alpha(\delta)} \| \delta \stackrel{\delta \to 0}{\longrightarrow} 0$. Set
  $\bar{Y}_n:= \sum_{i\le n} Y_i/n$ and $\delta_n^{est}:=n^{-1/2}$.  Then $\lim_{n\to\infty}\E\lVert R_{\alpha(\delta_n^{est})} \bar{Y}_n -K^+\hat{y}\rVert^2 =0$.
\end{theorem}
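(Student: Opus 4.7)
The plan is to carry out the standard bias--variance decomposition. Set $\alpha_n:=\alpha(\delta_n^{est})=\alpha(1/\sqrt n)$. Because the parameter choice is a priori, $\alpha_n$ is a deterministic number, so $R_{\alpha_n}$ is a fixed bounded linear operator; in particular $\E[R_{\alpha_n}\bar Y_n]=R_{\alpha_n}\E\bar Y_n=R_{\alpha_n}\hat y$. Writing
\begin{equation*}
R_{\alpha_n}\bar Y_n-K^+\hat y=R_{\alpha_n}(\bar Y_n-\hat y)+(R_{\alpha_n}\hat y-K^+\hat y),
\end{equation*}
the first summand has mean zero and the second is deterministic, so expanding $\E\|\cdot\|^2$ makes the cross term vanish and yields
\begin{equation*}
\E\lVert R_{\alpha_n}\bar Y_n-K^+\hat y\rVert^2=\E\lVert R_{\alpha_n}(\bar Y_n-\hat y)\rVert^2+\lVert R_{\alpha_n}\hat y-K^+\hat y\rVert^2.
\end{equation*}

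The deterministic bias term $\lVert R_{\alpha_n}\hat y-K^+\hat y\rVert^2$ tends to zero as $n\to\infty$, since $\alpha_n\to 0$, $\hat y\in\mathcal D(K^+)$, and by definition a regularisation satisfies $R_\alpha y\to K^+y$ for every $y\in\mathcal D(K^+)$.

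For the stochastic term, bound $\E\lVert R_{\alpha_n}(\bar Y_n-\hat y)\rVert^2\le\lVert R_{\alpha_n}\rVert^2\,\E\lVert\bar Y_n-\hat y\rVert^2$. Using that the $Y_i-\hat y$ are i.i.d.\ mean-zero $\mathcal Y$-valued random variables with $\E\lVert Y_1-\hat y\rVert^2<\infty$, the pairwise orthogonality of these centred summands in $L^2(\Omega;\mathcal Y)$ gives $\E\lVert\bar Y_n-\hat y\rVert^2=\E\lVert Y_1-\hat y\rVert^2/n=\sigma^2(\delta_n^{est})^2$ with $\sigma^2:=\E\lVert Y_1-\hat y\rVert^2$ (note $\E\lVert Y_1-\hat y\rVert^2\le\E\lVert Y_1\rVert^2<\infty$). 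Hence
\begin{equation*}
\E\lVert R_{\alpha_n}(\bar Y_n-\hat y)\rVert^2\le\sigma^2\bigl(\lVert R_{\alpha(\delta_n^{est})}\rVert\,\delta_n^{est}\bigr)^2,
\end{equation*}
and the right-hand side tends to zero by the assumption $\lVert R_{\alpha(\delta)}\rVert\delta\to 0$.

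I do not expect a genuine obstacle: the only subtlety is the Hilbert-space identity $\E\lVert\sum_{i\le n}(Y_i-\hat y)\rVert^2=\sum_{i\le n}\E\lVert Y_i-\hat y\rVert^2$, which follows from expanding the norm as an inner product and using independence together with $\E(Y_i-\hat y)=0$, and is the place where the finite second-moment hypothesis is actually used.
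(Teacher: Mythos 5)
Your proof is correct and follows essentially the same route as the paper's: the bias--variance decomposition with vanishing cross term, the identity $\E\lVert\bar Y_n-\hat y\rVert^2=\E\lVert Y_1-\hat y\rVert^2/n$ from independence and centering, and the two hypotheses $\alpha(\delta)\to 0$ (bias) and $\lVert R_{\alpha(\delta)}\rVert\delta\to 0$ (variance). The only cosmetic difference is that the paper applies $R_\alpha$ to each $Y_i$ before invoking independence, whereas you bound by $\lVert R_{\alpha_n}\rVert^2\E\lVert\bar Y_n-\hat y\rVert^2$ first; the two orderings give the same estimate.
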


\begin{proof}
%Note that infact it suffice to assume $K$ to be bounded and linear.
Because of linearity, $\E \lek R_{\alpha} Y_1 \rek = R_{\alpha}\E \lek Y_1\rek= R_{\alpha}\hat{y}$ and thus by (\ref{phyth})
 
 \begin{align*}\label{apriorieq1}
  \E\| R_{\alpha} \bar{Y}_n - R_{\alpha} \hat{y}\|^2 &= \frac{1}{n^2}\E\left\lVert \sum_{i=1}^n R_{\alpha}\left(Y_i-\hat{y}\right)\right\rVert^2 = \frac{\E\| R_{\alpha}Y_1 - R_{\alpha}\hat{y}\|^2}{n},
 \end{align*}
\noindent since $R_{\alpha}Y_i \in \mathcal{R}(K^*)$ where the latter is separable.  Therefore, by the bias-variance-decomposition,
 \begin{align*}
  \E\| R_{\alpha(\delta_n^{est})}\bar{Y}_n - K^+\hat{y}\|^2 &= \E\| R_{\alpha(\delta_n^{est})}\bar{Y}_n - R_{\alpha(\delta_n^{est})} \hat{y} + R_{\alpha(\delta_n^{est})}\hat{y} - K^+\hat{y}\|^2\\
                                                               &= \E\| R_{\alpha(\delta_n^{est})}\bar{Y}_n - R_{\alpha(\delta_n^{est})} \hat{y}\|^2  + \| R_{\alpha(\delta_n^{est})}\hat{y} - K^+\hat{y}\|^2\\
                                                               &= \frac{\E\| R_{\alpha(\delta_n^{est})}Y_1 - R_{\alpha(\delta_n^{est})}\hat{y}\|^2}{n} + \| R_{\alpha(\delta_n^{est})}\hat{y} - K^+\hat{y}\|^2\\
                                                               &\le \frac{\| R_{\alpha(\delta_n^{est})}\|^2}{n} \E\| Y_1 - \hat{y}\|^2 + \| R_{\alpha(\delta_n^{est})}\hat{y} - K^+\hat{y}\|^2\\
                                                               &= \| R_{\alpha(\delta_n^{est})}\|^2{\delta_n^{est}}^2 \E\| Y_1 - \hat{y}\|^2 + \| R_{\alpha(\delta_n^{est})}\hat{y} - K^+\hat{y}\|^2\\
                                                               &\to 0 \qquad  \mbox{for}\quad n\to\infty.
 \end{align*}
\hfill$\qed$
\end{proof}

\noindent As in the deterministic case, under additional source conditions we can prove convergence rates.
We restrict to regularisations  $R_\alpha:=F_{\alpha}\left(K^*K\right)K^*$ defined via the spectral decomposition (see \cite{engl1996regularization}) with the following assumptions for the generating filter.
\begin{assumptions}\label{assumptions1}
$(F_{\alpha})_{\alpha>0}$ is a regularising filter, i.e. a family of piecewise continuous real valued 
functions on $[0,\lVert K\rVert^2]$, continuous from the right, with $\lim_{\alpha\to0}F_{\alpha}(\lambda)=\frac{1}{\lambda}$ for all $\lambda\in (0,\lVert K \rVert^2]$  and 
$\lambda F_{\alpha}(\lambda)\le C_R$ for all $\alpha>0$ and all $\lambda\in \left(0,\lVert K\rVert^2\right]$, where $C_R>0$ is some constant. Moreover, it has
qualification $\nu_0>0$, i.e. $\nu_0$ is maximal such that for all $\nu \in[0,\nu_0]$ there exists a constant $C_{\nu}>0$ with
\begin{equation*}
\sup_{\lambda\in(0,\lVert K\rVert^2]}\lambda^{\nu/2}| 1 -\lambda F_{\alpha}(\lambda)|\le C_{\nu}\alpha^{\nu/2}.
\end{equation*}
Finally, there is a constant $C_F>0$ such that 
$|F_{\alpha}(\lambda)|\le C_F/\alpha$ for all $0<\lambda\le \|K\|^2$.
\end{assumptions}

\begin{remark}
The generating filter of the following regularisation methods fullfill the Assumption \ref{assumptions1}:
\begin{enumerate}
\item  Tikhonov regularisation (qualification $2$)
\item $n$-times iterated Tikhonov regularisation (qualification $2n$),
\item truncated singular value regularisation (infinite qualification),
\item Landweber iteration (infinite qualification).
\end{enumerate}

\end{remark}

\begin{theorem}[Rate of convergence of aprioi regularisation]\label{aprioriws}
 Assume that $K:\mathcal{X}\to \mathcal{Y}$ is a bounded linear operator with non-closed range between  Hilbert spaces and that $Y_1,Y_2,...$ are i.i.d. $\mathcal{Y}-$valued
 random variables which fullfill $\E[ Y_1] = \hat{y}\in \mathcal{D}(K^+)$ and $0<\E\lVert Y_1\rVert^2<\infty$. Let $R_{\alpha}$ be induced by a filter
 fullfilling Assumption \ref{assumptions1}. Set $\bar{Y}_n:= \sum_{i\le n} Y_i/n$ and $\delta_n^{est}=n^{-1/2}$. Assume that for $0<\nu\le \nu_0$ and $\rho>0$ we have
 that $K^+\hat{y}=(K^*K)^{\nu/2}w$ for some $w\in\mathcal{X}$ with $\| w \| \le \rho$. Then if for constants $0<c<C$,
 \begin{equation*}
  c \left(\frac{\delta_n^{est}}{\rho}\right)^\frac{2}{\nu+1} \le \alpha(\delta_n^{est}) \le C \left(\frac{\delta_n^{est}}{\rho}\right)^\frac{2}{\nu+1},
 \end{equation*}
  \noindent we have that $\sqrt{\E\| R_{\alpha(\delta_n^{est})}\bar{Y}_n - K^+\hat{y} \|^2} \le C' {\delta_n^{est}}^\frac{\nu}{\nu+1} \rho^\frac{1}{\nu+1} = \mathcal{O}(n^{-\frac{\nu}{2(\nu+1)}}) $ for some constant $C'>0$.
\end{theorem}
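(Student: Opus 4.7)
The plan is to mimic the bias/variance decomposition already used in the proof of Theorem \ref{apriori} and then feed in the two deterministic estimates coming from Assumption \ref{assumptions1}. Concretely, by linearity of $R_\alpha$, unbiasedness, and independence of the $Y_i$, I would start from
\[
\E\| R_{\alpha(\delta_n^{est})}\bar Y_n - K^+\hat y\|^2 = \frac{\E\| R_{\alpha(\delta_n^{est})} Y_1 - R_{\alpha(\delta_n^{est})}\hat y\|^2}{n} + \| R_{\alpha(\delta_n^{est})}\hat y - K^+\hat y\|^2,
\]
which splits the problem into a filter variance term (that I want to be $\lesssim {\delta_n^{est}}^2/\alpha$) and a filter bias term (that I want to be $\lesssim \alpha^{\nu/2}\rho$), both of which are purely deterministic.

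For the variance term I would use $R_\alpha = F_\alpha(K^*K)K^*$, so that $R_\alpha^* R_\alpha = KF_\alpha(K^*K)^2K^*$ and, by the spectral theorem,
\[
\|R_\alpha\|^2 = \sup_{\lambda\in(0,\|K\|^2]}\lambda F_\alpha(\lambda)^2 = \sup_\lambda (\lambda F_\alpha(\lambda))\,F_\alpha(\lambda)\le C_R\,C_F/\alpha,
\]
where I invoke both bounds in Assumption \ref{assumptions1} simultaneously. This yields a variance estimate of order $C_R C_F \E\|Y_1-\hat y\|^2/(n\,\alpha(\delta_n^{est})) = \mathcal{O}({\delta_n^{est}}^2/\alpha(\delta_n^{est}))$, and $\E\|Y_1-\hat y\|^2$ is finite since $\E\|Y_1\|^2<\infty$ and $\E Y_1=\hat y$. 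For the bias term I would use that $\hat y\in\mathcal{D}(K^+)$ implies $K^*\hat y = K^*K\cdot K^+\hat y$, so with the source condition $K^+\hat y=(K^*K)^{\nu/2}w$,
\[
R_\alpha\hat y - K^+\hat y = -(I-F_\alpha(K^*K)K^*K)(K^*K)^{\nu/2}w,
\]
and the qualification inequality immediately gives $\|R_{\alpha(\delta_n^{est})}\hat y - K^+\hat y\|\le C_\nu \alpha(\delta_n^{est})^{\nu/2}\rho$.

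Combining the two pieces gives $\E\|R_{\alpha(\delta_n^{est})}\bar Y_n - K^+\hat y\|^2 \le \tilde C_1 {\delta_n^{est}}^2/\alpha(\delta_n^{est}) + \tilde C_2 \alpha(\delta_n^{est})^{\nu}\rho^2$, a standard expression minimised up to constants by $\alpha\sim(\delta_n^{est}/\rho)^{2/(\nu+1)}$, which is exactly the prescribed window. Substituting and taking square roots yields the claimed bound $C'\,{\delta_n^{est}}^{\nu/(\nu+1)}\rho^{1/(\nu+1)}$. I do not anticipate a real obstacle: the stochasticity only contributes the factor $1/n$ from Theorem \ref{apriori}, and the remaining work is the classical deterministic rate argument. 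The one place to be careful is estimating $\|R_\alpha\|^2$: using only $|F_\alpha(\lambda)|\le C_F/\alpha$ would give $\|R_\alpha\|^2\le \|K\|^2 C_F^2/\alpha^2$, which spoils the balance, so one must exploit the joint bound $\lambda F_\alpha(\lambda)\le C_R$ as above.
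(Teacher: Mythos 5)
Your proposal is correct and follows essentially the same route as the paper: the identical bias/variance decomposition from Theorem \ref{apriori}, the bound $\|R_\alpha\|^2\le C_RC_F/\alpha$ and the qualification estimate $\|R_\alpha\hat y-K^+\hat y\|\le C_\nu\rho\,\alpha^{\nu/2}$ (which the paper packages as Propositions \ref{propass1} and \ref{propass2} and cites without proof, while you derive them spectrally), followed by substituting the prescribed window for $\alpha$. The point you flag about needing the joint bound $\lambda F_\alpha(\lambda)\le C_R$ together with $|F_\alpha(\lambda)|\le C_F/\alpha$, rather than $|F_\alpha|\le C_F/\alpha$ alone, is exactly the content of Proposition \ref{propass1}.
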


\begin{proof}

 \noindent We proceed similiary to the proof of Theorem \ref{apriori}, using additionally Proposition \ref{propass1} of section \ref{sec:4}.
 
 \begin{align*}
  \E\| R_{\alpha(\delta_n^{est})}\bar{Y}_n - K^+\hat{y}\|^2 &= \E\| R_{\alpha(\delta_n^{est})}\bar{Y}_n - R_{\alpha(\delta_n^{est})} \hat{y}\|^2  + \| R_{\alpha(\delta_n^{est})}\hat{y} - K^+\hat{y}\|^2\\
                                                            &\le \| R_{\alpha(\delta_n^{est})}\|^2{\delta_n^{est}}^2 \E\| Y_1 - \hat{y}\|^2 + \| R_{\alpha(\delta_n^{est})}\hat{y} - K^+\hat{y}\|^2\\
                                                                               &\le  C_RC_F\E\| Y_1 - \hat{y}\|^2 \frac{{\delta_n^{est}}^2}{\alpha(\delta_n^{est})} + C_{\nu}^2 \rho^2 \alpha(\delta_n^{est})^{\nu}\\
                                                                               &\le  \frac{C_RC_F\E\| Y_1 - \hat{y}\|^2}{c}  {\delta_n^{est}}^\frac{-2}{\nu+1} \rho^\frac{2}{\nu+1} {\delta_n^{est}}^2  \\
                                                                               &~~~~~~~~~~~~~+ C_{\nu}^2 C^{\nu} {\delta_n^{est}}^\frac{2\nu}{\nu+1} \rho^\frac{-2\nu}{\nu+1} \rho^2\\
                                                                               &\le C'^2 {\delta_n^{est}}^\frac{2\nu}{\nu+1} \rho^\frac{2}{\nu+1}.
 \end{align*}
\hfill$\qed$
\end{proof}

\begin{remark}\label{remark.2.1}
For separable Hilbert spaces one could alternatively argue as follows: The spaces $\mathcal{X}':=L^2(\Omega,\mathcal{X})=\{X:\Omega \to \mathcal{X}:\E\lVert X\rVert^2<\infty\}$ and $\mathcal{Y}':=L^2(\Omega,\mathcal{Y})$ are also Hilbert spaces, with scalar products $(X,\tilde{X})_{\mathcal{X}'}:=\sqrt{\E(X,\tilde{X})_{\mathcal{X}}}$ and $(\cdot,\cdot)_{\mathcal{Y}'}$ defined similary. Then $K:\mathcal{X}\to\mathcal{Y}$ induces naturally a bounded linear operator $K':\mathcal{X}'\to\mathcal{Y}',X\mapsto KX$. Clearly we have that $\hat{y}\in \mathcal{Y}'$, and  $(\bar{Y}_n)_n$ is a sequence in $\mathcal{Y}'$ which fullfills 
\begin{equation*}
\lVert \bar{Y}_n-\hat{y}\rVert_{\mathcal{Y}'}:=\sqrt{(\bar{Y}_n-\hat{y},\bar{Y}_n-\hat{y})_{\mathcal{Y}'}} = \sqrt{\frac{\E\lVert Y_1-\hat{y}\rVert^2}{n}}=\sqrt{\E\lVert Y_1-\hat{y}\rVert^2}\delta_n^{est}
\end{equation*}
and we can use the classic deterministic results for $K':\mathcal{X}'\to\mathcal{Y}'$ and $\bar{Y}_n$ and $\delta_n^{est}$.

\end{remark}

\section{The discrepancy principle}

In this section we restrict to compact operators with dense range. Note that then $\mathcal{Y}=\overline{\mathcal{R}(K)}$ will  be automatically separable.
In practice the above parameter choice strategies are of limited interest, since they require the knowledge of the abstract smoothness parameters $\nu$ and
$\rho$. The classical discrepancy principle would be to choose $\alpha_n$ such that

\begin{equation}\label{eq.4.1}
 \| (KR_{\alpha_n}-Id)\bar{Y}_n\| \approx \delta_n^{true} =  \| \bar{Y}_n-\hat{y}\|,
\end{equation}

\noindent which is not possible, because of the unknown $\delta_n^{true}$. So we replace it with our estimator $\delta_n^{est}$ and implement the discrepancy principle via Algorithm 1 with or without the optional emergency stop.

  \begin{algorithm}\label{algorithm1}
 \caption{Discrepancy principle with estimated data error (optional: with emergency stop)}
\begin{algorithmic}[1]
\STATE Given measurements $Y_1,...,Y_n$;
\STATE Set $\bar{Y}_n:=\sum_{i\le n}Y_i/n$ and $\delta_n^{est}=1/\sqrt{n}$ or $\delta^{est}_n=\sqrt{\sum_{i\le n}\| Y_i -\bar{Y}_n\|^2/(n-1)}/\sqrt{n}$.
 \STATE Choose a $q\in (0,1)$. 
\STATE $k=0$;
  \WHILE{$\| (KR_{q^k}-Id)\bar{Y}_n\| > \delta_n^{est}$ (optional: and $q^k>1/n$)}
  \STATE $k=k+1$;
  \ENDWHILE
  \STATE $\alpha_n=q^k$;
\end{algorithmic}
\end{algorithm}

\begin{remark}
 To our knowledge, the idea of an emergency stop first appeared in \cite{blanchard2012discrepancy}. It provides a deterministic lower bound for
 the regularisation parameter, which may avoid overfitting. We use an elementary form of an emergency stop here, which does not require the knowledge of the singular value decomposition
 of $K$. It would be interesting to see, how more sophisticated versions of the emergency stop worked here, which is not clear to us since in our general setting
 we cannot rely
 on the concentration properties of Gaussian noise.
\end{remark}

%  \begin{algorithm}\label{algorithm1}
% \caption{Discrepancy principle with estimated data error}
%\begin{algorithmic}[1]
%\STATE Choose a $q\in (0,1)$. 
%\FOR{$n=1,2,3,...$}
%\FOR{$k=0,1,2,...$}
%  \IF{$\| (KR_{q^k}-Id)\bar{Y}_n\| > \delta_n^{est}$}
%  \STATE $k=k+1$
%  \ELSE
%  \STATE $\alpha_n=q^k$
%  \ENDIF
%\ENDFOR
%\STATE $n=n+1$
%\ENDFOR
%\end{algorithmic}
%\end{algorithm}

\noindent Algorithm 1 will terminate, if we use the emergency stop. Otherwise, we can guarantee that Algorithm 1 terminates, if $K$ has dense image (or equivalently, if $K^*$ is injective) and if $\delta_n^{est}>0$. This is because then $\lim_{\alpha\to 0} KR_{\alpha}=P_{\overline{\mathcal{R}(K)}}=Id$ pointwise, so $\lVert (KR_{q^k}-Id)\bar{Y}_n\rVert < \delta_n^{est}$ for $k$ large enough . If we decided to use the sample variance, it may happen that $\delta_n^{est}=0$. But assuming $\E\lVert Y_1-\hat{y}\rVert^2>0$, it follows that $\mathbb{P}\left(\delta_n^{est}=0\right)=\mathbb{P}\left(Y_1=...=Y_n\right)\to 0$ for $n\to\infty$ (with exponential rate). If the distribution of $Y_1$ posseses a density (with respect to the Gaussian measure for example), then actually $\mathbb{P}(Y_1=...=Y_n)=0$ for all $n\in \mathbb{N}$.\\
 Unlike in the previous section, here the $L^2$ error will not converge in general, even if $Y_1$ has a density. The regularisation parameter $\alpha_n$ is now random, since it depends on the potentially bad random data. With a diminishing probability $p$ we are underestimating the data error 
significantly, and thus the discrepancy principle gives a too small $\alpha$ and we still have $p\| R_{\alpha}\| \gg 1$ in such a case.\\
In the following we will need the singular value decomposition of the compact operator $K$ with dense range (see \cite{cavalier2011inverse}): there exists a monotone sequence
$\lVert K \rVert =\sigma_1\ge \sigma_2 \ge ...>0$ with  $\sigma_l{\to}0$ for $l\to\infty$. Moreover there are families of orthonormal vectors 
 $(u_l)_{l\in\mathbb{N}}$ and $(v_l)_{l\in\mathbb{N}}$ with $span( u_l:l\in  \mathbb{N})=\mathcal{Y}$, $span(v_l:l\in\mathbb{N})= \mathcal{N}(K)^\bot$ 
 such that $Kv_l=\sigma_lv_l$ and  $K^*u_l=\sigma_lv_l$.
\subsection{A counter example for convergence}\label{counter}

We now show that a naive use of the discrepancy principle, as implemented in Algorithm 1 without emergency stop, may fail to converge in $L^2$.
To simplify calculations we pick Gaussian noise and the truncated singular value regularisation and we set $\delta_n^{est}=1/\sqrt{n}$.
We choose $\mathcal{X}:=l^2(\mathbb{N})$ with the standard basis $\{u_k:=(0,...,0,1,0,...)\}$ and consider the diagonal operator 

\begin{equation*}
K:l^2(\mathbb{N})\to l^2(\mathbb{N}),\quad u_l \mapsto \left(\frac{1}{100}\right)^\frac{l}{2} u_l
\end{equation*}

\noindent with $\hat{x}=0=\hat{y}=K\hat{x}$. Hence the $\sigma_l=(1/100)^\frac{l}{2}$ are the eigenvalues of $K$ and 

\begin{equation*}
R_{\alpha}:l^2(\mathbb{N})\to l^2(\mathbb{N}), \quad y \mapsto \sum_{l:\sigma_l^2\ge\alpha} \sigma_l^{-1}(y,u_l)u_l.
\end{equation*}
 \noindent We assume that the noise is distributed along $y:= \sum_{l\ge 2} 1/\sqrt{l(l-1)} u_l$, so we have that $\sum_{l> n} (y,u_l)^2=1/n$
  and thus $y\in l^2(\mathbb{N})$. That is we set
$\bar{Y}_n:=\sum_{i\le n} Y_i = \sum_{i\le n} Z_iy$, where $Z_i$ are i.i.d. standard Gaussians. We define $\Omega_n:=\{Z_i\ge 1, i=1...n\}$, a (very unlikely) event
on which we significantly underestimate the true data error. We get that
$\mathbb{P}(\Omega_n):=\mathbb{P}(Z_1\ge 1)^n\ge 1/10^n$. Moreover, by the definition of the discrepancy principle

\begin{align*}
 \frac{1}{n} \chi_{\Omega_n}={\delta_n^{est}}^2 \chi_{\Omega_n} &\ge \| (KR_{\alpha_n}-Id)\bar{Y}_n\|^2 \chi_{\Omega_n} = |\bar{Z}_n|^2 \| (KR_{\alpha_n}-Id)y\|^2 \chi_{\Omega_n}\\
                             &\ge \| (KR_{\alpha_n}-Id)y\|^2\chi_{\Omega_n}\\
                             &=\sum_{l:\sigma_l^2<\alpha_n} (y,u_l)^2 \chi_{\Omega_n}= \sum_{l:(1/100)^i<\alpha_n} (y,u_l)^2\chi_{\Omega_n}\\
                             &= \sum_{l> \frac{\log(\alpha_n)}{\log(1/100)}}(y,u_l)^2 \chi_{\Omega_n}\ge \frac{\log(1/100)}{\log(\alpha_n)}\chi_{\Omega_n}\\
                             \Longrightarrow \alpha_n \chi_{\Omega_n} &< \frac{1}{100^n}.
\end{align*}

\noindent It follows that

\begin{align*} 
\E\| R_{\alpha_n} \bar{Y}_n - K^+ \hat{y}\|^2 &= \E\| R_{\alpha_n}\bar{Y}_n\|^2 \ge \E\| R_{\alpha_n} \bar{Y}_n \chi_{\Omega_n}\|^2\\
                                                     &=  \E\left[\bar{Z}_n^2\| R_{\alpha_n}y \chi_{\Omega_n}\|\right]^2\ge \E\lVert R_{1/100^n} y \chi_{\Omega_n}\rVert^2\\
                                                     &\ge \sum_{l:\sigma_i^{2}\ge 1/100^n} \sigma_l^{-2}(y,u_l)^2 \mathbb{P}(\Omega_n) \ge\frac{1}{10^n} \sum_{l\le n} \sigma_l^{-2}(y,u_l)^2\\
                                                     &\ge \frac{1}{10^n} 100^n (y,u_n)^2 = \frac{10^n}{n(n-1)}\to \infty.
\end{align*}

\noindent That is the probability of the events $\Omega_n$ is not small enough to compensate the huge error we have on these events, so in the end $\E\lVert R_{\alpha_n}\bar{Y}_n-K^+\hat{y}\rVert^2\to \infty$ for $n\to\infty$.

 \subsection{Convergence in probability of the discrepancy principle}

In this section we show, that the discrepancy principle yields convergence in probability, matching asymptotically the optimal deterministic rate.
The proofs of the Theorems \ref{thwos} and \ref{thws} and of Corollary \ref{cordisc} are given in the following section.

\begin{theorem}[Convergence of the discrepancy principle]\label{thwos}
  Assume that $K$ is a compact operator with dense range between  Hilbert spaces $\mathcal{X}$ and $\mathcal{Y}$ and that $Y_1,Y_2,...$ are i.i.d. $\mathcal{Y}-$valued
  random variables with $\E Y_1=\hat{y}\in \mathcal{R}(K)$ and $0<\E\| Y_1-\hat{y}\|^2 < \infty$. Let $R_{\alpha}$ be induced by a filter fullfilling Assumption \ref{assumptions1} with $\nu_0>1$. Applying Algorithm 1 with or without the emergency stop %\ref{algorithm1}
   yields a sequence $(\alpha_n)_n$. Then
  we have that for all $\varepsilon> 0$
  
  \begin{equation*}
   \mathbb{P}\left(\| R_{\alpha_n}\bar{Y}_n - K^+\hat{y}\| \le \varepsilon \right)\stackrel{n\to\infty}{\longrightarrow}1,
  \end{equation*}

  \noindent i.e. $R_{\alpha_n}\bar{Y}_n \prob K^+\hat{y}$.
\end{theorem}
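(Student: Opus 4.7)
The plan is to transport the stochastic convergence statement to a deterministic one by restricting to a high-probability event on which both the true and the estimated noise levels are well controlled. Fix $\varepsilon > 0$ and $\eta > 0$; I aim to show that for all $n$ large enough, $\PP(\| R_{\alpha_n}\bar{Y}_n - K^+\hat{y}\| > \varepsilon) \leq \eta$.

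Chebyshev's inequality together with the identity $\E\|\bar{Y}_n - \hat{y}\|^2 = \E\| Y_1 - \hat{y}\|^2/n$ shows that $\sqrt{n}\,\delta_n^{true}$ is tight. The scaled estimate $\sqrt{n}\,\delta_n^{est}$ is either the constant $1$ or the sample standard deviation $s_n$, which by the strong law of large numbers converges almost surely to $\sigma := \sqrt{\E\| Y_1 - \hat{y}\|^2} > 0$. Choosing $M > 1$ sufficiently large, the event
\[
A_n^M := \left\{ \delta_n^{true} \leq \tfrac{M}{\sqrt{n}}\right\} \cap \left\{ \tfrac{1}{M\sqrt{n}} \leq \delta_n^{est} \leq \tfrac{M}{\sqrt{n}}\right\}
\]
satisfies $\PP(A_n^M) \geq 1 - \eta$ for every $n$ large enough, and on $A_n^M$ one has $\delta_n^{true} \leq M^2\, \delta_n^{est}$ together with $\delta_n^{est} \to 0$.

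On $A_n^M$ the situation reduces to a classical noisy-data setting: the ``data'' $\bar{Y}_n$ has perturbation $\xi_n := \bar{Y}_n - \hat{y}$ of norm at most $M^2\delta_n^{est}$, and Algorithm~1 amounts to the discrepancy principle for this data with assumed noise level $\delta_n^{est}$. I would then split
\[
\| R_{\alpha_n}\bar{Y}_n - K^+\hat{y}\| \leq \| R_{\alpha_n}\| \, \delta_n^{true} + \| R_{\alpha_n}\hat{y} - K^+\hat{y}\|,
\]
the second (approximation) term tending to zero whenever $\alpha_n \to 0$ by the regularisation property of $R_\alpha$. For the first term, the stopping rule together with the one-step-before-stopping inequality $\|(KR_{\alpha_n/q} - I)\bar{Y}_n\| > \delta_n^{est}$, the bound $\|(KR_\alpha - I)\xi_n\| \leq (1 + C_R)\,\delta_n^{true}$, and the monotonicity of the filter (Assumption~\ref{assumptions2}) yield a lower bound on $\alpha_n$ of the order of a power of $\delta_n^{est}$; combined with the filter bound $\| R_{\alpha_n}\|^2 \leq C_R C_F/\alpha_n$ already used in the proof of Theorem~\ref{aprioriws}, this forces $\|R_{\alpha_n}\|\,\delta_n^{true} \to 0$ as $n \to \infty$ uniformly on $A_n^M$.

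The crux of the argument is precisely this deterministic analysis on $A_n^M$, where one must cope with the fact that Algorithm~1 uses $\delta_n^{est}$ instead of the actual perturbation size $\delta_n^{true}$. Here the monotonicity of the filter is essential, since it lets one compare residuals at nearby regularisation parameters and rerun a classical Engl--Hanke--Neubauer-type argument with the effective noise level $M^2 \delta_n^{est}$; without monotonicity the iteration could in principle overshoot between consecutive $q^k$. Once this deterministic convergence is established, on $A_n^M$ one obtains $\|R_{\alpha_n}\bar{Y}_n - K^+\hat{y}\| \leq \varepsilon$ for all $n$ large enough, hence $\PP(\| R_{\alpha_n}\bar{Y}_n - K^+\hat{y}\| > \varepsilon) \leq 1 - \PP(A_n^M) \leq \eta$, and since $\eta > 0$ was arbitrary this yields $R_{\alpha_n}\bar{Y}_n \prob K^+\hat{y}$.
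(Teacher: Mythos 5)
Your reduction to a deterministic analysis on the event $A_n^M$ does not go through, and the obstruction is exactly the one the paper singles out as the central difficulty of this theorem. On $A_n^M$ you only know $\delta_n^{true}\le M^2\delta_n^{est}$ with $M>1$ large — and $M$ \emph{must} be large, because $\sqrt{n}\,\delta_n^{true}$ converges in distribution to $\|Z\|$ for a Gaussian $Z$ while $\sqrt{n}\,\delta_n^{est}$ converges a.s.\ to a constant $\gamma$, so the probability that $\delta_n^{true}$ exceeds $\delta_n^{est}$ does \emph{not} tend to zero and you cannot take $M^2$ close to $1$. But Algorithm 1 stops at the threshold $\delta_n^{est}$, not at $M^2\delta_n^{est}$. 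The classical argument you invoke extracts a lower bound on $\alpha_n$ from the one-step-before inequality $\delta_n^{est}<\|(KR_{\alpha_n/q}-Id)\bar Y_n\|\le\|(KR_{\alpha_n/q}-Id)\hat y\|+\|(KR_{\alpha_n/q}-Id)(\bar Y_n-\hat y)\|$, and it needs the noise term on the right to be strictly smaller than the threshold on the left. With your worst-case bound $\|(KR_\alpha-Id)\xi_n\|\le(1+C_R)\,\delta_n^{true}\le(1+C_R)M^2\delta_n^{est}$ the noise term dominates the threshold, so you only obtain $\|(KR_{\alpha_n/q}-Id)\hat y\|>\bigl(1-(1+C_R)M^2\bigr)\delta_n^{est}$, a negative lower bound, hence no control on $\alpha_n$ whatsoever. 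This is precisely the remark at the start of Section 4 of the paper: the worst-case bound gives $f(\alpha_n)\ge\delta_n^{est}-\delta_n^{true}$ and fails whenever $\delta_n^{true}>\delta_n^{est}$, an event of probability bounded away from zero for every $n$.

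What is actually needed, and what your proposal is missing, is that the operator $KR_\alpha-Id$ applied to the \emph{specific noise realisation} $\bar Y_n-\hat y$ produces something much smaller than $\delta_n^{true}$ once $\alpha$ is small: by the central limit theorem $\sqrt n(\bar Y_n-\hat y)$ stabilises to a fixed Gaussian $Z$, and $(KR_\alpha-Id)Z\to0$ almost surely as $\alpha\to0$ by pointwise convergence of the filter, so that $\|(KR_{\alpha_n/q}-Id)(\bar Y_n-\hat y)\|\le c\,\delta_n^{est}$ with $c<1$ holds on events $\Omega_{n,c}$ of probability tending to one. Making this rigorous is delicate because $\alpha_n$ is random and depends on the same data: the paper first derives a deterministic decreasing bound $g(n)\ge\alpha_n$ valid with probability tending to one (using the strong law on finitely many singular components and the standing assumption $R_\alpha\hat y\ne K^+\hat y$), then uses the monotonicity of the filter to replace the random $\alpha_n$ by the deterministic $g(n)$ before taking limits; the well-posed special case $R_{a_0}\hat y=K^+\hat y$ has to be treated separately (Lemma 1). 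So monotonicity is not used to ``rerun a classical argument with effective noise level $M^2\delta_n^{est}$'' — that cannot work — but to decouple the two limits $n\to\infty$ and $\alpha\to0$. Your tightness bounds and the splitting of the error are fine as far as they go, but the core of the proof is absent.
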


\begin{remark}
 If one tried to argue as in Remark $1$ to show $L^2$ convergence one would have to determine the regularisation parameter not as given by equation (\ref{eq.4.1}), but such that $\E\| (KR_{\alpha}-Id)\bar{Y}_n\|^2 \approx \delta_n^{est}$, which is not practicable since we cannot calculate the expectation on the left hand side.
\end{remark}

\noindent The popularity of the discrepancy principles is a result of the fact that it guarantees optimal convergence rates under an additional source condition:  Assuming that there is a $0<\nu\le \nu_0-1$ (where $\nu_0$ is the qualification of the chosen regularisation method) such that $K^+\hat{y}=\left(K^*K\right)^\frac{\nu}{2}w$ for a $w\in \mathcal{X}$ with $\lVert w \rVert \le \rho$, then 

\begin{equation}\label{deterministicbound}
 \sup_{y^{\delta}:\| y^{\delta}-\hat{y}\| \le \delta}\| R_{\alpha(y^{\delta},\delta)}y^{\delta}-K^+\hat{y}\| \le C \rho^\frac{1}{\nu+1}\delta^\frac{\nu}{\nu+1}
\end{equation}

\noindent for some constant $C>0$. The next theorem shows a concentration result for the discrepancy principle as implemented in Algorithm 1, with a bound similiar to (\ref{deterministicbound}).
 \begin{theorem}[Rate of convergence of the discrepancy principle]\label{thws}
  Assume that $K$ is a compact operator with dense range between Hilbert spaces $\mathcal{X}$ and $\mathcal{Y}$. Moreover, $Y_1,Y_2,...$ are i.i.d. $\mathcal{Y}-$valued
  random variables with $\E Y_1=\hat{y}\in \mathcal{R}(K)$ and $0<\E\| Y_1-\hat{y}\|^2 < \infty$.  Let $R_{\alpha}$ be induced by a filter fullfilling Assumption \ref{assumptions1} with $\nu_0>1$.
  Moreover, assume that there is a $0<\nu\le \nu_0-1$ and a $\rho>0$ such that $K^+\hat{y}=(K^*K)^{\nu/2}w$ for some $w\in\mathcal{X}$ with $\| w \| \le \rho$. Applying Algorithm 1 with or without the emergency stop yields a sequence $(\alpha_n)_{n\in\mathbb{N}}$. Then there is a constant $L$,
  such that
  
  \begin{equation*}
   \mathbb{P}\left(\| R_{\alpha_n}\bar{Y}_n - K^+\hat{y}\| \le L \rho^\frac{1}{\nu+1} \max\left\{{\delta_n^{est}}^\frac{\nu}{\nu+1},{\delta_n^{true}}^\frac{\nu}{\nu+1}\left(\delta_n^{true}/\delta_n^{est}\right)^\frac{1}{\nu+1}\right\}\right)\stackrel{n\to\infty}{\longrightarrow}1.
  \end{equation*}

 \end{theorem}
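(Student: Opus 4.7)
The plan is to mirror the classical deterministic discrepancy-principle analysis (Engl--Hanke--Neubauer, Theorem 4.17 style) while carefully tracking the discrepancy between the estimated noise level $\delta_n^{est}$ and the true noise $\delta_n^{true}$, and then to control the random ratio $r_n:=\delta_n^{true}/\delta_n^{est}$ via a Markov argument. Because $\E[{\delta_n^{true}}^2]=\E\lVert Y_1-\hat{y}\rVert^2/n=\E\lVert Y_1-\hat{y}\rVert^2\cdot{\delta_n^{est}}^2$ at least in the case $\delta_n^{est}=1/\sqrt{n}$ (and an analogous bound holds for the sample-variance choice by the law of large numbers), for every $T>0$ the event $\Omega_{n,T}:=\{\delta_n^{true}\le T\delta_n^{est}\}$ has probability at least $1-\E\lVert Y_1-\hat{y}\rVert^2/T^2$. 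It is therefore enough to prove the stated inequality pointwise on each sample path in $\Omega_{n,T}$, with $T$ then sent to infinity.

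I would first record two filter-based estimates that follow from Assumption \ref{assumptions2} together with the source condition: the approximation error bound $\lVert R_\alpha\hat{y}-K^+\hat{y}\rVert\le C_\nu\rho\,\alpha^{\nu/2}$ from qualification at order $\nu$, and the companion residual bound $\lVert(KR_\alpha-I)\hat{y}\rVert\le C_{\nu+1}\rho\,\alpha^{(\nu+1)/2}$ from qualification at order $\nu+1$. This is precisely where the hypothesis $\nu\le\nu_0-1$ enters. Using the stopping criterion at the step just before termination, $\lVert(KR_{\alpha_n/q}-I)\bar{Y}_n\rVert>\delta_n^{est}$, together with the triangle inequality $\lVert(KR_{\alpha_n/q}-I)\bar{Y}_n\rVert\le\lVert(KR_{\alpha_n/q}-I)\hat{y}\rVert+(1+C_R)\delta_n^{true}$, I deduce a lower bound of the form
\begin{equation*}
\alpha_n\ge c\left(\frac{\max\{\delta_n^{est},\,\delta_n^{true}\}}{\rho}\right)^{2/(\nu+1)},
\end{equation*}
where the maximum reflects the fact that in the underestimation regime the stopping criterion forces the residual below $\delta_n^{est}$ even though the true noise contribution is $\delta_n^{true}$.

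The second leg of the proof is an interpolation (moment) inequality of the form $\lVert R_{\alpha_n}\bar{Y}_n-K^+\hat{y}\rVert\le c'\rho^{1/(\nu+1)}\lVert K(R_{\alpha_n}\bar{Y}_n-K^+\hat{y})\rVert^{\nu/(\nu+1)}$, proven by combining the source representation $R_{\alpha_n}\bar Y_n-K^+\hat y = R_{\alpha_n}(\bar Y_n-\hat y)-(I-K^*KF_{\alpha_n}(K^*K))(K^*K)^{\nu/2}w$ with Heinz' inequality on the positive operator $K^*K$. I then bound $\lVert K(R_{\alpha_n}\bar{Y}_n-K^+\hat{y})\rVert\le\lVert(KR_{\alpha_n}-I)\bar{Y}_n\rVert+\delta_n^{true}\le\delta_n^{est}+\delta_n^{true}$ and plug in the lower bound on $\alpha_n$. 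In the overestimation regime $\delta_n^{true}\le\delta_n^{est}$ this directly yields $L\rho^{1/(\nu+1)}{\delta_n^{est}}^{\nu/(\nu+1)}$; in the underestimation regime, propagating the $\max$ through the same algebra produces exactly the extra factor $(\delta_n^{true}/\delta_n^{est})^{1/(\nu+1)}$ that appears in the statement.

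The main obstacle I anticipate is precisely this underestimation case $\delta_n^{true}>\delta_n^{est}$. The classical deterministic proof relies essentially on the nominal noise level upper-bounding the true one, so the standard interpolation closes cleanly; here one must keep $\delta_n^{est}$ (used by the stopping rule) and $\delta_n^{true}$ (appearing in every triangle inequality involving $\bar{Y}_n-\hat{y}$) separate throughout, and it is only after the final combination that the elegant $\max$-form of the stated bound emerges. The reduction to the event $\Omega_{n,T}$ then converts the deterministic bound into convergence in probability by letting $T\to\infty$ after $n\to\infty$.
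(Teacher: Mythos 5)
Your overall architecture (interpolation for the approximation error, $\lVert R_{\alpha_n}\rVert\le \sqrt{C_RC_F/\alpha_n}$ for the noise propagation, and a lower bound on $\alpha_n$ extracted from the step just before termination) matches the paper's. But the step where you derive the lower bound on $\alpha_n$ has a genuine gap, and it is exactly the crux of this theorem. From $\delta_n^{est}<\lVert(KR_{\alpha_n/q}-I)\bar{Y}_n\rVert$ and your worst-case triangle inequality $\lVert(KR_{\alpha_n/q}-I)(\bar{Y}_n-\hat{y})\rVert\le(1+C_R)\delta_n^{true}$ you only get
\begin{equation*}
C_{\nu+1}\rho\,(\alpha_n/q)^{(\nu+1)/2}\;\ge\;\delta_n^{est}-(1+C_R)\,\delta_n^{true},
\end{equation*}
which is vacuous whenever $\delta_n^{true}\ge\delta_n^{est}/(1+C_R)$. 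That event does not become rare: $\sqrt{n}\,\delta_n^{true}$ converges in distribution to $\lVert Z\rVert$ for a Gaussian $Z$ while $\sqrt{n}\,\delta_n^{est}$ converges to a constant $\gamma$, so $\mathbb{P}(\delta_n^{true}\ge\delta_n^{est}/(1+C_R))\to\mathbb{P}(\lVert Z\rVert\ge\gamma/(1+C_R))>0$. On this event you have no lower bound on $\alpha_n$ at all, hence no control of $\delta_n^{true}/\sqrt{\alpha_n}$, and the probability in the conclusion cannot tend to $1$. A further warning sign: the lower bound you claim, $\alpha_n\ge c(\max\{\delta_n^{est},\delta_n^{true}\}/\rho)^{2/(\nu+1)}$, is stronger than anything the stopping rule can deliver (the rule only "sees" $\delta_n^{est}$), and if it were true it would yield $\rho^{1/(\nu+1)}{\delta_n^{true}}^{\nu/(\nu+1)}$ in the underestimation regime, i.e.\ a strictly better bound than the theorem asserts — the correction factor $(\delta_n^{true}/\delta_n^{est})^{1/(\nu+1)}$ in the statement exists precisely because the lower bound on $\alpha_n$ involves only $\delta_n^{est}$.

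The missing idea is the paper's central lemma: one must replace the worst-case bound $\lVert(KR_{\alpha_n/q}-I)(\bar{Y}_n-\hat{y})\rVert\le(1+C_R)\delta_n^{true}$ by the much sharper statement that, for \emph{every} fixed $c\in(0,1)$, the event $\{\lVert(KR_{\alpha_n/q}-I)(\bar{Y}_n-\hat{y})\rVert\le c\,\delta_n^{est}\}$ has probability tending to $1$. This holds because $\sqrt{n}(\bar{Y}_n-\hat{y})$ converges weakly to a fixed Gaussian $Z$ and $(KR_{\alpha}-I)Z\to0$ almost surely as $\alpha\to0$; making this rigorous requires first trapping $\alpha_n$ below a deterministic decreasing envelope $g(n)$ with probability tending to $1$, and then using the monotonicity of the filter (Assumption 2) to decouple the limit in $n$ from the limit in $\alpha$. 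With that event in hand, the pre-termination inequality gives $(1-c)\delta_n^{est}\le C_{\nu+1}\rho(\alpha_n/q)^{(\nu+1)/2}$ unconditionally on that event, which is the correct lower bound $\sqrt{\alpha_n}\gtrsim(\delta_n^{est}/\rho)^{1/(\nu+1)}$ and produces the stated $\max$. Two smaller points: your reduction to $\Omega_{n,T}=\{\delta_n^{true}\le T\delta_n^{est}\}$ is superfluous, since the asserted bound is allowed to contain the random quantity $\delta_n^{true}$; and your interpolation inequality should be applied to $R_{\alpha_n}\hat{y}-K^+\hat{y}$ only (the noise propagation term $R_{\alpha_n}(\bar{Y}_n-\hat{y})$ satisfies no source condition, so Heinz' inequality does not apply to the full error), which is how the paper proceeds.
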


\noindent We deduce a deterministic bound for $\lVert R_{\alpha_n}\bar{Y_n}-K^+\hat{y}\rVert$ (for $n$ large). 

\begin{corollary}\label{corthws}
Under the assumptions of Theorem \ref{thws}, for all $\varepsilon>0$ it holds that
   \begin{equation*}
   \mathbb{P}\left(\| R_{\alpha_n}\bar{Y}_n - K^+\hat{y}\| \le \rho^\frac{1}{\nu+1} \left(\frac{1}{\sqrt{n}}\right)^{\frac{\nu}{\nu+1}-\varepsilon}\right)\stackrel{n\to\infty}{\longrightarrow}1.
  \end{equation*}

\end{corollary}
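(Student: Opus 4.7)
The plan is to deduce the corollary directly from Theorem~\ref{thws} by showing that, with probability tending to one, both $\sqrt{n}\,\delta_n^{true}$ grows at most like $n^{\varepsilon/4}$ and $\sqrt{n}\,\delta_n^{est}$ stays in a fixed compact subset of $(0,\infty)$. Combining these two facts turns the awkward maximum on the right-hand side of Theorem~\ref{thws} into the stated power of $n$.

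First I would rewrite the second term in the maximum as
\[
{\delta_n^{true}}^{\frac{\nu}{\nu+1}}\bigl(\delta_n^{true}/\delta_n^{est}\bigr)^{\frac{1}{\nu+1}}
=\frac{\delta_n^{true}}{{\delta_n^{est}}^{1/(\nu+1)}}
=n^{-\frac{\nu}{2(\nu+1)}}\frac{\sqrt{n}\,\delta_n^{true}}{(\sqrt{n}\,\delta_n^{est})^{1/(\nu+1)}},
\]
while the first term is ${\delta_n^{est}}^{\nu/(\nu+1)}=n^{-\nu/(2(\nu+1))}(\sqrt{n}\,\delta_n^{est})^{\nu/(\nu+1)}$. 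The exponent $-\nu/(2(\nu+1))$ is therefore present in both terms, and what remains is to show that the two random factors $(\sqrt{n}\,\delta_n^{est})^{\nu/(\nu+1)}$ and $\sqrt{n}\,\delta_n^{true}/(\sqrt{n}\,\delta_n^{est})^{1/(\nu+1)}$ are both bounded by $n^{\varepsilon/2}/L$ with probability tending to one.

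For $\sqrt{n}\,\delta_n^{true}$ I would apply Markov: since $\E\|\bar Y_n-\hat y\|^2=\E\|Y_1-\hat y\|^2/n$,
\[
\PP\bigl(\sqrt{n}\,\delta_n^{true}>n^{\varepsilon/4}\bigr)\le \frac{\E\|Y_1-\hat y\|^2}{n^{\varepsilon/2}}\xrightarrow{n\to\infty}0.
\]
For $\sqrt{n}\,\delta_n^{est}$ there are two cases. If $\delta_n^{est}=1/\sqrt{n}$ then $\sqrt{n}\,\delta_n^{est}\equiv 1$ and there is nothing to prove. If $\delta_n^{est}=s_n/\sqrt{n}$ with $s_n^2=\sum_{i\le n}\|Y_i-\bar Y_n\|^2/(n-1)$, the identity $\sum_i\|Y_i-\bar Y_n\|^2=\sum_i\|Y_i\|^2-n\|\bar Y_n\|^2$ together with the (Hilbert space valued) strong law of large numbers applied to $\|Y_i\|^2$ and $Y_i$ gives $s_n^2\to \sigma^2:=\E\|Y_1-\hat y\|^2$ almost surely; since $\sigma^2>0$ by assumption, $s_n$ lies in $[\sigma/2,2\sigma]$ with probability tending to one.

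Intersecting the event of Theorem~\ref{thws} with $\{\sqrt{n}\,\delta_n^{true}\le n^{\varepsilon/4}\}$ and $\{\sigma/2\le \sqrt{n}\,\delta_n^{est}\le 2\sigma\}$ (or the trivial event in the simpler case), the bound of Theorem~\ref{thws} becomes
\[
\|R_{\alpha_n}\bar Y_n-K^+\hat y\|\le L\rho^{\frac{1}{\nu+1}}n^{-\frac{\nu}{2(\nu+1)}}\Bigl[(2\sigma)^{\frac{\nu}{\nu+1}}+(\sigma/2)^{-\frac{1}{\nu+1}}n^{\varepsilon/4}\Bigr],
\]
and the bracket is eventually dominated by $n^{\varepsilon/2}$, yielding the claimed estimate. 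The only nontrivial ingredient is the consistency of the sample variance in the Hilbert space setting; this is where the hypothesis $0<\E\|Y_1-\hat y\|^2<\infty$ is used in an essential way, but it follows routinely from the scalar law of large numbers applied to $\|Y_i\|^2$ combined with continuity of $\|\cdot\|^2$ on the limit of $\bar Y_n$.
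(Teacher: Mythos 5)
Your proof is correct and follows essentially the same route as the paper, which deduces the corollary from Theorem~\ref{thws} via $\lim_{n\to\infty}\mathbb{P}\left(\delta_n^{true}\le n^{\varepsilon'}\delta_n^{est}\le n^{-\frac{1}{2}+\varepsilon}\right)=1$; your Markov bound on $\sqrt{n}\,\delta_n^{true}$ and the almost-sure consistency of $s_n$ are just a fully written-out version of exactly those two facts.
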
 
 
 \begin{proof}[Corollary \ref{corthws}]
By the second assertion in  Lemma $\ref{centlemma}$ and Markov's inequality, for any $c,\varepsilon>0$,
\begin{equation*}
\lim_{n\to\infty}\mathbb{P}\left(\delta_n^{est},\delta_n^{true}\le cn^{-\frac{1}{2}+\varepsilon}\right)=1.
\end{equation*}
\qed
 
 \end{proof}

\noindent  The ad hoc emergency stop $\alpha_n>1/n$, additionally assures, that the $L^2$ error will not explode
(unlike in the counter example of the previous subsection). Under the assumption that $\E \lVert Y_1-\hat{y}\rVert^4<\infty$, one can guarantee, 
that the $L^2$ error will converge.
 
\begin{corollary}\label{cordisc}
Under the assumptions of Theorem \ref{thwos}, consider the sequence $\alpha_n$ determined by Algorithm 1 with emergency stop. Then there is a constant $C$ such that
$\E \lVert R_{\alpha_n}\bar{Y}_n-K^+\hat{y}\rVert^2\le C$ for all $n\in\mathbb{N}$. If additionally $\E \lVert Y_1-\hat{y}\rVert^4<\infty$,
then it holds that $\E\lVert R_{\alpha_n} \bar{Y}_n -K^+\hat{y}\rVert^2 \to0$ for $n\to\infty$.
\end{corollary}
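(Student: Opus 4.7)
The plan is to use the emergency stop to control $\lVert R_{\alpha_n}\rVert$ deterministically and then split the error into a noise part and an approximation part in the spirit of Theorem \ref{apriori}. For the bounded $L^2$ part, the key observation is that Assumption \ref{assumptions1} gives $\lambda F_\alpha(\lambda)^2 = (\lambda F_\alpha(\lambda))F_\alpha(\lambda) \le C_R C_F/\alpha$, hence $\lVert R_\alpha\rVert^2 \le C_RC_F/\alpha$, and the emergency stop enforces $\alpha_n \ge 1/n$, so that $\lVert R_{\alpha_n}\rVert^2 \le C_RC_Fn$ deterministically. For the approximation part, $|1-\lambda F_\alpha(\lambda)| \le 1 + C_R$ yields $\lVert R_{\alpha_n}\hat{y} - K^+\hat{y}\rVert \le (1+C_R)\lVert K^+\hat{y}\rVert$ uniformly in $n$, since $R_\alpha\hat{y} - K^+\hat{y} = (F_\alpha(K^*K)K^*K - \mathrm{Id})K^+\hat{y}$ for $\hat{y}\in\mathcal{D}(K^+)$.

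Combining, write $R_{\alpha_n}\bar{Y}_n - K^+\hat{y} = R_{\alpha_n}(\bar{Y}_n - \hat{y}) + (R_{\alpha_n}\hat{y} - K^+\hat{y})$. The second term is bounded pointwise. For the first,
\begin{equation*}
\E\lVert R_{\alpha_n}(\bar{Y}_n - \hat{y})\rVert^2 \le C_R C_F\, n\, \E\lVert \bar{Y}_n - \hat{y}\rVert^2 = C_R C_F\, \E\lVert Y_1 - \hat{y}\rVert^2,
\end{equation*}
where the i.i.d. identity $\E\lVert\bar{Y}_n - \hat{y}\rVert^2 = \E\lVert Y_1-\hat{y}\rVert^2/n$ absorbs the factor $n$ coming from the emergency stop. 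Together this produces the constant $C$ claimed in the first assertion.

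For the $L^2$ convergence under the fourth moment hypothesis, I would combine the convergence in probability provided by Theorem \ref{thwos} with uniform integrability of $\lVert R_{\alpha_n}\bar{Y}_n - K^+\hat{y}\rVert^2$. It suffices to show a uniform $L^4$ bound on $R_{\alpha_n}\bar{Y}_n - K^+\hat{y}$. Repeating the decomposition and using $\lVert R_{\alpha_n}\rVert^4 \le (C_R C_F)^2 n^2$, I would need $\E\lVert\bar{Y}_n - \hat{y}\rVert^4 = O(1/n^2)$. This follows from the standard computation for Hilbert-space i.i.d. sums with zero mean: expanding $\E\lVert\sum_{i\le n}(Y_i-\hat{y})\rVert^4$ into scalar products and keeping only the pair-matched index patterns yields $\E\lVert\sum(Y_i-\hat{y})\rVert^4 \le n\,\E\lVert Y_1-\hat{y}\rVert^4 + 3n(n-1)(\E\lVert Y_1-\hat{y}\rVert^2)^2$, so division by $n^4$ gives the $O(n^{-2})$ rate. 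The bound $\E\lVert R_{\alpha_n}(\bar Y_n-\hat y)\rVert^4 \le (C_RC_F)^2 n^2 \cdot O(1/n^2) = O(1)$ follows, and the approximation term is uniformly bounded pointwise.

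The uniform $L^4$ bound implies uniform integrability of the squared norms, which together with the convergence in probability from Theorem \ref{thwos} yields $\E\lVert R_{\alpha_n}\bar{Y}_n - K^+\hat{y}\rVert^2 \to 0$. The only slightly delicate step is handling the data-dependence of $\alpha_n$: it is crucial that $\lVert R_{\alpha_n}\rVert \le \sqrt{C_RC_Fn}$ pathwise, so one never needs independence between $\alpha_n$ and $\bar{Y}_n$. The main obstacle is therefore purely bookkeeping — ensuring the Hilbert-space fourth moment expansion is written out carefully enough that the $n^2$ blow-up from the emergency-stop bound is exactly cancelled by the $1/n^2$ decay of $\E\lVert\bar{Y}_n-\hat{y}\rVert^4$.
\qed
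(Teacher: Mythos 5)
Your proposal is correct and follows essentially the same route as the paper: a deterministic bound $\lVert R_{\alpha_n}\rVert^2\le C_RC_F/\alpha_n\le C_RC_F\,n$ from the emergency stop combined with $\E\lVert\bar Y_n-\hat y\rVert^2=\E\lVert Y_1-\hat y\rVert^2/n$ for the uniform boundedness, and the fourth-moment expansion $\E\lVert\bar Y_n-\hat y\rVert^4\le \bigl(n\E\lVert Y_1-\hat y\rVert^4+3n(n-1)(\E\lVert Y_1-\hat y\rVert^2)^2\bigr)/n^4=\mathcal{O}(n^{-2})$ for the convergence statement. The only cosmetic differences are that you invoke uniform integrability plus the convergence in probability of Theorem \ref{thwos} where the paper applies Cauchy--Schwarz on the complement of the high-probability event $\Omega_{n,c}$, and that you bound the approximation term by $(1+C_R)\lVert K^+\hat y\rVert$ instead of using the filter monotonicity to get $\lVert R_1\hat y-K^+\hat y\rVert$; both are equivalent.
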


\label{remarkdisc}

\subsection{Almost sure convergence}
The results so far delievered either convergence in probability or convergence in $L^2$. We give a short remark how one can obtain almost sure convergence. Roughly speaking, one has to multiply a $\sqrt{\log\log n}$ term to $\delta_n^{est}$. This is a simple consequence of the following theorem

\begin{theorem}[Law of the iterated logarithm]
Assume that $Y_1,Y_2,...$ is an i.i.d sequence with values in some seperable Hilbert space 
$\mathcal{Y}$. Moreover, assume that $\E Y_1 = 0$ and $\E\| Y_1\|^2<\infty$. Then we have that

\begin{equation*}
\mathbb{P}\left(\limsup_{n\to\infty} \frac{\|\sum_{i\le n} Y_i \|}{\sqrt{2 \E\| Y_1 \|^2n\log\log n}}\le 1\right) = 1.
\end{equation*}

\end{theorem}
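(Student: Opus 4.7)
The plan is to reduce the claim to the classical Hilbert-space Law of the Iterated Logarithm due to Kuelbs. Kuelbs' theorem asserts that for an i.i.d.\ sequence with values in a separable Hilbert space, centred and with finite second moment, one has
$$\limsup_{n\to\infty} \frac{\|S_n\|}{\sqrt{2n\log\log n}} \;=\; \|C\|_{\mathrm{op}}^{1/2} \quad \text{a.s.},$$
where $S_n := \sum_{i\le n} Y_i$ and $C$ denotes the covariance operator of $Y_1$. Once this identity is available, the stated bound reduces to estimating $\|C\|_{\mathrm{op}}$ by $\E\|Y_1\|^2$, which is elementary.

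The first step is to identify the covariance: set $Cx := \E[\langle Y_1, x\rangle Y_1]$. Under $\E\|Y_1\|^2<\infty$ this operator is self-adjoint, non-negative, and trace-class, with $\mathrm{tr}(C) = \E\|Y_1\|^2$. The second step is the spectral estimate $\|C\|_{\mathrm{op}} \le \mathrm{tr}(C)$, which expresses that the largest eigenvalue of a non-negative trace-class operator is dominated by the sum of all its eigenvalues; equivalently, $\sup_{\|x\|=1}\E[\langle Y_1,x\rangle^2] \le \E\|Y_1\|^2$ by Cauchy--Schwarz. Inserting this estimate into Kuelbs' identity gives the displayed inequality with constant $1$ on the right-hand side.

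Essentially everything hinges on having the Hilbert-space LIL at hand. The main obstacle to a self-contained argument would be proving Kuelbs' theorem from scratch: a naive approach via the scalar LIL applied to each fixed projection $\langle S_n,v\rangle$ yields only the correct $\limsup$ along that direction, and one cannot interchange the supremum over $v$ in the unit sphere with the almost-sure $\limsup$. A genuine proof therefore relies on exponential tail bounds for Hilbert-valued sums, a truncation at a slowly growing level, and a subsequence Borel--Cantelli argument on a geometric scale $n_k=\lfloor \theta^k\rfloor$. Since in the present application only the stated weaker upper bound is needed (the numerical constant on the right is irrelevant for converting $\delta_n^{est}$ into an almost-surely consistent estimator), invoking Kuelbs (or a textbook account such as Ledoux--Talagrand, \emph{Probability in Banach Spaces}) is the natural route, and the remaining work is the one-line comparison $\|C\|_{\mathrm{op}}\le \mathrm{tr}(C)$.
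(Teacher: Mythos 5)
Your proposal is correct and follows essentially the same route as the paper, which simply cites the Hilbert-space LIL (Corollary 8.8 of Ledoux--Talagrand, i.e.\ Kuelbs' theorem) and leaves the reduction implicit. Your added step making the ``simple consequence'' explicit --- namely $\limsup_n \|S_n\|/\sqrt{2n\log\log n}=\|C\|_{\mathrm{op}}^{1/2}$ a.s.\ together with $\|C\|_{\mathrm{op}}\le \mathrm{tr}(C)=\E\|Y_1\|^2$ via Cauchy--Schwarz --- is exactly the right justification for the constant $1$ in the stated bound.
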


\begin{proof}
This is a simple consequence of Corollary 8.8 in \cite{ledoux1991probability}.
\end{proof}

\noindent So if $\E Y_1 = \hat{y} \in  \mathcal{Y}$ we have for $\delta_n^{true}=\lVert \bar{Y}_n-\hat{y}\rVert$

\begin{equation*}
\mathbb{P}\left(\limsup_{n\to\infty} \frac{\sqrt{n}\delta_n^{true}}{\sqrt{2\E\lVert Y_1-\hat{y}\rVert^2\log\log n}} \le  1\right)=1,
\end{equation*}

\noindent that is, with probability $1$ it holds that $\delta_n^{true}\le\sqrt{\frac{2\E\lVert Y_1-\hat{y}\rVert^2\log\log n}{n}}$ for $n$ large enough. Consequently, for some $\tau>1$ the estimator should be

\begin{equation*}
\delta_n^{est}:= \tau s_n \sqrt{\frac{2\log\log n}{n}},
\end{equation*}

\noindent where $ s_n$ is the square root of the sample variance. Since $\mathbb{P}(\lim_{n\to\infty} s_n^2=\E\lVert Y_1-\hat{y}\rVert^2)=1$ and $\tau>1$ it holds that $\sqrt{\E\lVert Y_1-\hat{y}\rVert}\le \tau s_n$ for $n$ large enough with probability $1$ and
thus $\delta_n^{true}\le \delta_n^{est}$ for $n$ large enough with probability $1$. In other words, there is an event $\Omega_0 \subset \Omega$ with $\mathbb{P}(\Omega_0)=1$ such that for any $\omega \in \Omega_0$ there is a $N(\omega)\in \mathbb{N}$ with
$\delta_n^{true}(\omega)\le\delta_n^{est}(\omega)$ for all $n\ge N(\omega)$. So we can use $\bar{Y}_n$ and $\delta_n^{est}$
together with any deterministic regularisation method to get almost sure convergence.

\section{Proofs of Theorem \ref{thwos} and \ref{thws}}\label{sec:4}
\subsection{Proofs without emergency case}
We will multiple times use the Pythagorean theorem for independent separable Hil-\\
bert
 space valued random variables $Z_i$  with $\E \lVert Z_i\rVert^2<\infty$ and $\E Z_i=0$,

\begin{equation}\label{phyth}
\E \left\lVert \sum_{i=1}^n Z_i \right\rVert^2 = \sum_{i=1}^n \sum_{l,l'=1}^\infty \E\left[(Z_i,e_l)(Z_i,e_{l'})\right] = \sum_{i=1}^n\E\left[\sum_{j=1}^\infty(Z_i,e_j)^2\right]=\sum_{i=1}^n \E\left\lVert Z_i \right\rVert^2,
\end{equation}
\noindent where $(e_l)_{l\in\mathbb{N}}$ is an orthonormal basis. Based on this, the central ingridient will be the following lemma, which strengthens the pointwise worst case error bound $\lVert (KR_{\alpha}-Id)(\bar{Y}_n-\hat{y})\rVert \le C_0 \delta_n^{true}$ in some sense.

\begin{lemma}\label{centlemma}
For all $\varepsilon>0$ and (deterministic) sequences $(q_n)_{n\in\mathbb{N}}$ with $q_n>0$ and $\lim_{n\to\infty}q_n=0$, it holds that

\begin{equation*}
\mathbb{P}\left(\lVert (KR_{q_n}-Id)(\bar{Y}_n-\hat{y})\rVert \ge \varepsilon/\sqrt{n}\right) \to 0
\end{equation*}

\noindent and 

\begin{equation*}
\mathbb{P}\left(|\sqrt{n}\delta_n^{est}-\gamma|\ge\varepsilon\right)\to 0
\end{equation*}

\noindent for $n\to\infty$, where $\gamma=1$ or $\gamma=\sqrt{\E\lVert Y_1-\hat{y}\rVert^2}$, depending on if we used the sample variance or not.
\end{lemma}

\begin{proof}
By Tschebyscheff's inequality and (\ref{phyth})

\begin{align*}
\mathbb{P}\left(\lVert (KR_{q_n}-Id)(\bar{Y}_n-\hat{y})\rVert \ge \varepsilon/\sqrt{n}\right) &\le \frac{n}{\varepsilon^2}\E\lVert (KR_{q_n}-Id)(\bar{Y}_n-\hat{y})\rVert^2\\
          &= \frac{1}{\varepsilon^2}\E\lVert(KR_{q_n}-Id)(Y_1-\hat{y})\rVert^2.
\end{align*}

\noindent  Since $K$ has dense range, $KR_{q_n}-Id$ converges to $0$ pointwise for $n\to\infty$ and it follows that $(KR_{q_n}-Id)(Y_1-\hat{y})$ also converges pointwise to $0$. By inequality (\ref{propeq2}) of Proposition \ref{propass1} below, $\lVert (KR_{q_n}-Id)(Y_1-\hat{y}) \rVert^2 \le C_0 \lVert Y_1-\hat{y}\rVert^2$, so
$\E\lVert(KR_{q_n}-Id)(Y_1-\hat{y})\rVert^2\to 0$ for $n\to\infty$ by the dominated convergence theorem.  The second assertion only needs a proof for $\gamma=\sqrt{\E\lVert Y_1-\hat{y}\rVert^2}$ and then

\begin{align*}
n {\delta_n^{est}}^2 = \frac{1}{n-1} \sum_{i=1}^n\lVert Y_i-\bar{Y}_n\rVert^2 &= \frac{n}{n-1} \left(\frac{1}{n}\sum_{i=1}^n\lVert Y_i\rVert^2- \lVert \bar{Y}_n\rVert^2\right) \\
&\to \E \lVert Y_1\rVert^2-\lVert \hat{y}\rVert^2 =\E\lVert Y_1-\hat{y}\rVert^2= \gamma^2
\end{align*}

\noindent almost surely (thus in particular in probability) for $n\to\infty$ by the strong law of large numbers (Corollary 7.10 in \cite{ledoux1991probability}) and the bias-variance-decomposition. Therefore $\sqrt{n}\delta_n^{est}\to \gamma$ in probability for $n\to\infty$.
\qed

\end{proof}

\noindent For convergence in probability it does not
matter how large the error is on sets with diminishing probability and with Lemma \ref{centlemma} we will show, that the probability of certain 'good events' is 1 in the limit of infinitely many measurements. \\
Define for $q\in(0,1)$ (as chosen in Algorithm 1)

\begin{align}\label{th4eq1}
\psi_q:~&\mathbb{R}^+ \to \left\{q^k~:~k\in \mathbb{N}_0\right\}\\\notag
       & \alpha\mapsto \max\left\{q^k: q^k\le \alpha\right\}.
\end{align}

\noindent So $\min(q\alpha,1)\le\psi_q(\alpha)\le \alpha$ and by definition, if $\lVert \left(KR_{\psi_q(\alpha)}-Id\right)\bar{Y}_n\rVert<\delta_n^{est}$, it holds that $\alpha_n\ge\min(q\alpha,1)$, where $\alpha_n$ is the output of Algorithm 1.\\
 We will also need some well known properties of regularisations defined by filters which fullfill Assumption \ref{assumptions1}. These are mostly easy modifications from \cite{engl1996regularization}.

\begin{proposition}\label{propass1}
The constants in the following are defined as in Assumption \ref{assumptions1}. We assume, that $K$ is bounded and linear with non-closed range.
Assume that $(R_{\alpha})_{\alpha>0}$ is induced by a regularising filter fullfilling $|F_{\alpha}(\lambda)|\le C_F/\alpha$ for all $0<\lambda\le \|K\|^2$.  Then

\begin{align}\label{propeq1}
\lVert R_{\alpha}\rVert &\le \sqrt{C_RC_F}/\sqrt{\alpha}\\\label{propeq2}
\lVert Id-KR_{\alpha}\rVert &\le C_0
\end{align}
 
\noindent for all $\alpha>0$, with $C_0\ge 1$. If moreover, the filter has qualification $\nu_0>0$ and there is a $w \in \mathcal{X}$ with  $\lVert w \rVert \le \rho$ such that $K^+\hat{y}=\left(K^*K\right)^\frac{\nu}{2}w$   for some $0<\nu\le\nu_0$, then
\begin{align}\label{propeq3}
\lVert R_{\alpha}\hat{y}-K^+\hat{y}\rVert &\le C_{\nu} \rho \alpha^{\nu/2}\\\label{propeq4}
\lVert R_{\alpha}\hat{y}-K^+\hat{y}\rVert &\le \lVert KR_{\alpha}\hat{y}-KK^+\hat{y}\rVert^\frac{\nu}{\nu+1}C_0^\frac{1}{\nu+1}\rho^\frac{1}{\nu+1}
\end{align}

\noindent for all $\alpha>0$. If additionally, $\nu_0\ge\nu+1>1$, then 

\begin{equation}\label{propeq5}
\lVert KR_{\alpha}\hat{y}-KK^+\hat{y}\rVert \le C_{\nu+1}\rho \alpha^\frac{\nu+1}{2}.
\end{equation}
 
 \noindent  Moreover, if $K$ is compact, than for all $x\in\mathcal{X}$ there is a function  $g:\mathbb{R}^+\to \mathbb{R}^+$ with $g(\alpha)\to\infty$ for $\alpha\to 0$, such that
 
 \begin{equation}\label{propeq6}
 \lim_{\alpha\to 0}\lVert(KR_{\psi_q\left(\alpha g(\alpha)\right)}-Id)Kx\rVert/\sqrt{\alpha} = 0,
 \end{equation}

\noindent where $\psi_q$ is given in (\ref{th4eq1}). 
 
\end{proposition}

\begin{proof}[Proposition \ref{propass1}]
(\ref{propeq1}) and (\ref{propeq4}) are shown in the proofs of Theorem 4.2 and Theorem 4.17 in \cite{engl1996regularization}. (\ref{propeq3}) and (\ref{propeq4}) are Theorem 4.3 in \cite{engl1996regularization}. (\ref{propeq2}) follows directly from Assumption \ref{assumptions1}.\\
For (\ref{propeq6}), let $x\in\mathcal{X}$ be fixed and set

\begin{equation*}
\tilde{g}(\alpha):=\sup\left\{ t>0~:~\left\lVert \left(KR_{\psi_q\left(\alpha t\right)}-Id\right)Kx\right\rVert/\sqrt{\alpha}\le t^{-1}\right\}.
\end{equation*}

\noindent W.l.o.g.  $\tilde{g}$ is finite for any $\alpha>0$.
Now we first show that 

\begin{equation}\label{proofprop1}
\lim_{\alpha\to0}\lVert (KR_{\alpha}-Id)Kx\rVert/\sqrt{\alpha}=0.
\end{equation}
\noindent We mimic the proof of Theorem 3.1.17 of \cite{nakamura2015inverse} and set $\varepsilon>0$. We  fix $L$, such that $C_1^2 \sum_{l=L+1}^\infty (\hat{x},v_j)^2<\varepsilon$. Then

\begin{align*}
& \lVert (KR_{\alpha}-Id)K\hat{x}\rVert^2/\alpha = \sum_{l=1}^\infty  \left(F_{\alpha}(\sigma_l^2)\sigma_l^2-1\right)^2 \frac{\sigma_l^2}{\alpha}(\hat{x},v_l)^2\\
  \le &\left(\sup_{\lambda>0} \lambda^\frac{\nu_0}{2}|F_{\alpha}(\lambda)\lambda-1|\right)^2\lVert \hat{x}\rVert^2\sum_{l=1}^L \frac{\sigma_l^{2(1-\nu_0)}}{\alpha}\\
  &\qquad + \left(\sup_{\lambda>0} \lambda^\frac{1}{2}|F_{\alpha}(\lambda)\lambda-1|\right)^2\frac{\sum_{l=L+1}^\infty (\hat{x},v_j)^2}{\alpha}\\
  \le &C_{\nu_0}^2L \sigma_L^{2(1-\nu_0)}\lVert \hat{x}\rVert^2\alpha^{\nu_0-1}+C_1^2 \sum_{l=L+1}^\infty (\hat{x},v_j)^2< 2 \varepsilon
\end{align*}

\noindent for all $\alpha<\left(\varepsilon^{-1} C_{\nu_0}^2L\sigma_L^{2(1-\nu_0)}\lVert \hat{x}\rVert^2\right)^{-\frac{1}{\nu_0-1}}$, therefore  $\lVert (KR_{\alpha}-Id)Kx\rVert/\sqrt{\alpha}=0$ for $\alpha\to 0$. So for any $t>0$
\begin{align*}
\left\lVert \left(KR_{\psi_q\left(\alpha t\right)}-Id\right)Kx\right\rVert/\sqrt{\alpha} &= \sqrt{\frac{\psi_q\left(\alpha t\right)}{\alpha}}\left\lVert \left(KR_{\psi_q\left(\alpha t\right)}-Id\right)Kx\right\rVert/\sqrt{\psi_q\left(\alpha t\right)}\le \frac{1}{t} 
\end{align*}
 for $\alpha$ small enough, because of (\ref{proofprop1}) and since $\psi_q(\alpha t)\le \alpha t$. So  $\tilde{g}(\alpha)\to\infty$ for $\alpha\to 0$ and by definition of $\tilde{g}$ the claim holds for $g(\alpha):=\tilde{g}(\alpha)-1$ ($g$ is well defined for $\alpha$ small enough).
\qed

\end{proof}

\begin{proof}[Theorem 4]
 Set $q_n:=\psi_q(b_n)$ where $b_n:=\left(\frac{1}{\rho}\frac{\gamma}{4C_{\nu+1}\sqrt{n}}\right)^\frac{2}{\nu+1}$ with $\gamma=1$ or $\gamma=\sqrt{\E \lVert Y_1-\hat{y}\rVert^2}$, depending on if we used the sample variance or not, and $\psi_q$ given in (\ref{th4eq1}). Define

\begin{equation}\label{th4omega1}
\Omega_n:=\Omega_n(q_n,\gamma):=\left\{ |\sqrt{n}\delta_n^{est}-\gamma|< \gamma/2~,~\lVert(KR_{q_n}-Id)(\bar{Y}_n-\hat{y})\rVert< \gamma/\sqrt{16n}\right\}.
\end{equation}

\noindent  Then by (\ref{propeq5}) and since $q_n\le b_n$,

\begin{align}\label{th4eq2}
\lVert (KR_{q_n}-Id)\bar{Y}_n\rVert\chi_{\Omega_n} &\le \lVert (KR_{q_n}-Id)\hat{y}\rVert\chi_{\Omega_n} + \lVert (KR_{q_n}-Id)(\bar{Y}_n-\hat{y})\rVert\chi_{\Omega_n}\\\notag
                    &\le C_{\nu+1}\rho b_n^\frac{\nu+1}{2}\chi_{\Omega_n} + \frac{\gamma}{4\sqrt{n}}\chi_{\Omega_n} =\frac{\gamma}{2\sqrt{n}}\chi_{\Omega_n}<\delta_n^{est}\chi_{\Omega_n},
\end{align}

\noindent so $\alpha_n \chi_{\Omega_n} \ge q b_n \chi_{\Omega_n} \ge q \left(\frac{\delta_n^{est}}{6C_{\nu+1}}\right)^\frac{2}{\nu+1}\chi_{\Omega_n}$ for $n$ large enough.
By (\ref{propeq4}), (\ref{propeq2}) and since $K$ has dense image,

\begin{align*}
 \| R_{\alpha_n}\hat{y} - K^+\hat{y}\|  &\le \lVert KR_{\alpha_n}\hat{y}-KK^+\hat{y}\rVert^\frac{\nu}{\nu+1}C_0^\frac{1}{\nu+1}\rho^\frac{1}{\nu+1}= \lVert KR_{\alpha_n}\hat{y}-\hat{y}\rVert^\frac{\nu}{\nu+1}C_0^\frac{1}{\nu+1}\rho^\frac{1}{\nu+1}\\
                                           &\le \left(\| (KR_{\alpha_n}-Id)\bar{Y}_n\| + \| (KR_{\alpha_n}-Id)(\hat{y} - \bar{Y}_n)\|\right)^\frac{\nu}{\nu+1}C_0^\frac{1}{\nu+1}\rho^\frac{1}{\nu+1}\\
                                           &\le \left(\delta_n^{est} + \| (KR_{\alpha_n}-Id)(\hat{y} - \bar{Y}_n)\|\right)^\frac{\nu}{\nu+1}C_0^\frac{1}{\nu+1}\rho^\frac{1}{\nu+1}\\
                                           &\le   \left(\delta_n^{est} + C_0\delta_n^{true}\right)^\frac{\nu}{\nu+1}C_0^\frac{1}{\nu+1}\rho^\frac{1}{\nu+1}\le \left(\delta_n^{est} + \delta_n^{true}\right)^\frac{\nu}{\nu+1}C_0\rho^\frac{1}{\nu+1} .                                     
\end{align*} 

\noindent Finally, 

\begin{align*}\notag
 &\| R_{\alpha_n}\bar{Y}_n - K^+\hat{y}\| \chi_{\Omega_n}\\\notag
 \le& \| R_{\alpha_n}\hat{y} - K^+\hat{y}\| \chi_{\tilde{\Omega}_n} + \| R_{\alpha_n}\bar{Y}_n - R_{\alpha_n}\hat{y}\|\chi_{\Omega_n}\\\notag
                                                                 \le& \left( \delta_n^{est}+\delta_n^{true} \right)^{\frac{\nu}{\nu+1}} C_0\rho^{\frac{1}{\nu+1}}\chi_{\Omega_n}   +\sqrt{C_RC_F} \frac{\delta_n^{true}}{\sqrt{\alpha_n}}\chi_{\Omega_n}\\
    \le &\left(2\max\left( \delta_n^{est},\delta_n^{true}\right) \right)^{\frac{\nu}{\nu+1}} C_0\rho^{\frac{1}{\nu+1}}   \chi_{\Omega_n}+\sqrt{C_RC_F} \rho^\frac{1}{\nu+1}\left(\frac{6C_{\nu+1}}{\delta_n^{est}}\right)^\frac{1}{\nu+1}\frac{\delta_n^{true}}{\sqrt{q}}\chi_{\Omega_n}\\                                                       
    \le& L\max\left\{{\delta_n^{est}}^\frac{\nu}{\nu+1},{\delta_n^{true}}^\frac{\nu}{\nu+1}\left(\frac{\delta_n^{true}}{\delta_n^{est}}\right)^\frac{1}{\nu+1}\right\},
\end{align*}

\noindent with $L:=2^\frac{\nu}{\nu+1}C_0\rho^\frac{1}{\nu+1}+\sqrt{C_RC_F/q}\left(6C_{\nu+1}\right)^\frac{1}{\nu+1}$ and the proof is finished, because $\mathbb{P}\left(\Omega_n\right)\to1$ for $n\to\infty$  by Lemma \ref{centlemma}.
\qed
\end{proof}

\begin{proof}[Proof of Theorem 3]
W.l.o.g. we may assume that there are arbitrarily large $l\in\mathbb{N}$ with $(\hat{y},u_l)\neq 0$, since otherwise we could apply Theorem $4$ with any $\nu>0$. Let $\varepsilon'>0$. Then there is a $L\in\mathbb{N}$ such that $(\hat{y},u_L)\neq 0$ and $\left(F_{q^k}(\sigma_L^2)\sigma_L^2-1\right)^2>1/2$ for all $k\in\mathbb{N}_0$ with $q^k\ge\varepsilon'$ (because the $F_{q^k}$ are bounded and $\sigma_l\to 0$ for $l\to\infty$). Set
\begin{equation}\label{th3omega0}
\Omega_n:=\left\{| \sqrt{n}\delta_n^{est}-\gamma|< \gamma~,~(\bar{Y}_n,u_L)^2\ge (\hat{y},u_L)^2/2\right\}.
\end{equation}

\noindent Then for $n\ge 16\gamma^2/(\hat{y},u_L)^2$, 

\begin{align*}
\delta_n^{est}\chi_{\Omega_n} &\le \frac{2\gamma}{\sqrt{n}}\chi_{\Omega_n} < \sqrt{\frac{(\hat{y},u_L)^2}{4}}\chi_{\Omega_n} \le \sqrt{\left(F_{q^k}(\sigma_L^2)\sigma_L^2-1\right)^2 (\bar{Y}_n,u_L)^2}\chi_{\Omega_n}\\
&\le \sqrt{ \sum_{l=1}^\infty \left(F_{q^k}(\sigma_l^2)\sigma_l^2-1\right)^2\left(\bar{Y}_n,u_l\right)^2}\chi_{\Omega_n} = \lVert (KR_{q^k}-Id)\bar{Y}_n\rVert\chi_{\Omega_n}
\end{align*}

\noindent for all $k\in\mathbb{N}_0$ with $q^k\ge \varepsilon'$. Thus for $\Omega_n$ given in (\ref{th3omega0})

\begin{equation}\label{p3eq1}
\lim_{n\to\infty}\mathbb{P}\left(\alpha_n\le \varepsilon'\right)\ge \lim_{n\to\infty}\mathbb{P}\left( \Omega_n\right)= 1
\end{equation}
\noindent  by Lemma \ref{centlemma} and since $(\bar{Y}_n,u_L)=\sum_{i=1}^n(Y_i,u_L)/n\to \E(Y_1,u_L)=(\hat{y},u_L)\neq0$ almost surely for $n\to\infty$.
Set $q_n:=\psi_q\left(b_n\right)$ with $b_n:=n^{-1}g(n^{-1})$ and  $g$ and $\psi_q$ given in (\ref{propeq6})  and (\ref{th4eq1}). Define

\begin{equation}\label{th3omega1}
\Omega_n:=\left\{| \sqrt{n}\delta_n^{est}-\gamma|< \gamma/2~,~\lVert (KR_{q_n}-Id)(\bar{Y}_n-\hat{y})\rVert< \gamma/4\sqrt{n}\right\}.
\end{equation}

\noindent Then for $n$ large enough (such that $\lVert (KR_{q_n}-Id)\hat{y}\rVert\sqrt{n}\le \gamma/4$, see (\ref{propeq6}) with $\alpha=n^{-1}$),

\begin{align}\label{th3eq1}
 \lVert (KR_{q_n}-Id)\bar{Y}_n\rVert\chi_{\Omega_n}
 &\le \frac{1}{\sqrt{n}}\sqrt{n}\lVert (KR_{q_n}-Id)\hat{y}\rVert\chi_{\Omega_n} + \lVert (KR_{q_n}-Id)(\bar{Y}_n-\hat{y})\rVert \chi_{\Omega_n}\\\notag
 \le &\frac{\gamma}{4\sqrt{n}}\chi_{\Omega_n}+ \frac{\gamma}{4\sqrt{n}}\chi_{\Omega_n} \le \frac{\gamma}{2\sqrt{n}}\chi_{\Omega_n}\le \delta_n^{est}\chi_{\Omega_n}.
\end{align}

\noindent That is $\alpha_n \chi_{\Omega_n}\ge q b_n \chi_{\Omega_n}\ge q n^{-1}g(n^{-1}) \chi_{\Omega_n}$ for $n$ large enough. Finally set

\begin{equation*}
\tilde{\Omega}_n :=\left\{ \delta_n^{true}\le \sqrt{\frac{\sqrt{g(n^{-1})}}{n}}~,~\lVert R_{\alpha_n}\hat{y}-K^+\hat{y}\rVert \le \frac{\varepsilon}{2}\right\}\cap \Omega_n,
\end{equation*}

\noindent with $\Omega_n$ given in (\ref{th3omega1}). So $\mathbb{P}\left(\tilde{\Omega}_n\right)\to 1$ for $n\to\infty$, since $\mathbb{P}\left(\delta_n^{true}\le \sqrt{\sqrt{g(n^{-1})}/n}\right)\to1$, 
because of $g(n^{-1})\to \infty$, $\mathbb{P}\left(\Omega_n\right)\to 1$ by 
Lemma \ref{centlemma} and $\mathbb{P}\left(\lVert R_{\alpha_n}\hat{y}-K^+\hat{y}\rVert \le \frac{\varepsilon}{2}\right)\to 1$  by (\ref{p3eq1}) ($\varepsilon'>0$ is arbitrary). Thus for $n$ large enough (so that $C_RC_F/q\sqrt{g(n^{-1})} \le \frac{\varepsilon^2}{4}$) 

\begin{align*}
\lVert R_{\alpha_n}\bar{Y}_n-K^+\hat{y}\rVert\chi_{\tilde{\Omega}_n} &\le \lVert R_{\alpha_n}\hat{y}-K^+\hat{y}\rVert\chi_{\tilde{\Omega}_n} + \lVert R_{\alpha_n}(\bar{Y}_n-\hat{y})\rVert\chi_{\tilde{\Omega}_n}\\
&\le \frac{\varepsilon}{2}+ \sqrt{\frac{C_RC_F}{\alpha_n}}\delta_n^{true}\chi_{\tilde{\Omega}_n} \le \frac{\varepsilon}{2}+\sqrt{\frac{C_RC_F}{q\sqrt{g(n^{-1})}}} \le \varepsilon,
\end{align*}

\noindent and $\mathbb{P}\left(\lVert R_{\alpha_n}\bar{Y}_n-K^+\hat{y}\rVert \le \varepsilon\right)\ge \mathbb{P}\left(\tilde{\Omega}_n\right)\to 1$ for $n\to\infty$.
\qed
\end{proof}

\subsection{Proofs for the emergency stop case}\label{subsec41}
Again, denote by $\alpha_n$ the output of Algorithm 1 without the emergency stop. For the emergency stop, we have to consider $\lVert R_{\max\{\alpha_n,1/n\}}\bar{Y}_n-K^+\hat{y}\rVert$. It suffices to show that $\mathbb{P}\left(\alpha_n\ge 1/n\right)\to 1$ for $n\to\infty$.\\
 First assume that $K^+\hat{y}=(K^*K)^\frac{\nu}{2}w$ for some $w\in\mathcal{X}$ with $\lVert w \rVert \le \rho$ and $0<\nu\le \nu_0-1$. With (\ref{th4eq2}) it follows that 

\begin{equation}\label{estopeq1}
\mathbb{P}\left( \alpha_n \ge q\left( \frac{\gamma}{4 \rho C_{\nu+1} \sqrt{n}}\right)^\frac{2}{\nu+1}\right)\ge \mathbb{P}\left( \Omega_n\right)\to 1
\end{equation}

\noindent for $n\to\infty$, with $\Omega_n$ given in (\ref{th4omega1}). Otherwise, if there are no such $\nu,
\rho$ and $w$, then (\ref{th3eq1}) implies that for all $\varepsilon>0$ 

\begin{equation}\label{estopeq2}
\mathbb{P}\left( \alpha_n \ge qg(n^{-1})/n\right)\ge\mathbb{P}\left( \Omega_n\right)\to 1
\end{equation}

\noindent for $n\to\infty$, with $g(n^{-1})\to\infty$ and $\Omega_n$ given in (\ref{th3omega1}). Then (\ref{estopeq1}) and (\ref{estopeq2}) together yield $\mathbb{P}\left(\alpha_n\ge 1/n\right)\to 1$ for $n\to\infty$ and therefore the result.
\qed

\subsection{Proof of Corollary \ref{cordisc}}

\begin{proof}[Corollary \ref{cordisc}]
Fix $\varepsilon>0$. Denote by $\alpha_n$ the output of the discrepancy principle with emergency stop and set 
\begin{equation}\label{ceq0}
\Omega_n:=\{ \lVert R_{\alpha_n}\bar{Y}_n-K^+\hat{y}\rVert \le \varepsilon\}.
\end{equation}
\noindent It is
\begin{equation}\label{ceq1}
\lVert R_{\alpha}\hat{y}-K^+\hat{y}\rVert \le \lVert R_{\alpha}K-Id\rVert \lVert \hat{x} \rVert  \le C
\end{equation}

\noindent for all $\alpha>0$. By the triangle inequality,
\begin{align*}
\E \lVert R_{\alpha_n}\bar{Y}_n - K^+\hat{y}\rVert^2 &= 2\E \lVert R_{\alpha_n}\bar{Y}_n - R_{\alpha_n}\hat{y}\rVert^2 + 2\E\lVert R_{\alpha_n}\hat{y}-K^+\hat{y}\rVert^2\\
                                                     &\le  2\E \left[\lVert R_{\alpha_n}\rVert^2{\delta_n^{true}}^2\right] + 2C^2
                                                     \le 2C_RC_F\E\left[ {\delta_n^{true}}^2/\alpha_n\right] + 2C^2\\
                                                      &\le 2nC_RC_F\E {\delta_n^{true}}^2 + 2C^2= 2C_RC_F \E \lVert Y_1-\hat{y}\rVert^2 + 2C^2\le C',
\end{align*}

\noindent where $C'$ does not depend on $n$ and where we used $\alpha_n\le 1$ and (\ref{ceq1}) in the second step and $\alpha_n\ge1/n$ in the fourth. By  (\ref{ceq0}) there holds $\lVert R_{\alpha_n}\bar{Y}_n-K^+\hat{y}\rVert\chi_{\Omega_n} \le \varepsilon$, so

\begin{align*}
\E \lVert R_{\alpha_n}\bar{Y}_n-K^+\hat{y}\rVert^2 & = \E\left[\lVert R_{\alpha_n}\bar{Y}_n-K^+\hat{y}\rVert^2\chi_{\Omega_{n}}\right] + \E\left[\lVert R_{\alpha_n}\bar{Y}_n-K^+\hat{y}\rVert^2\chi_{\Omega_{n}^C}\right]\\
&\le \varepsilon^2 +\E\left[\lVert R_{\alpha_n}\bar{Y}_n-K^+\hat{y}\rVert^2\chi_{\Omega_{n}^C}\right].
\end{align*}
\noindent We apply Cauchy-Schwartz to the second term

\begin{align*}
\E\left[\lVert R_{\alpha_n}\bar{Y}_n-K^+\hat{y}\rVert^2\chi_{\Omega_{n}^C}\right] &\le \sqrt{\E\lVert R_{\alpha_n}\bar{Y}_n-K^+\hat{y}\rVert^4\E\chi_{\Omega_{n}^C}^2}\\
&=\sqrt{\E\lVert R_{\alpha_n}\bar{Y}_n-K^+\hat{y}\rVert^4~\mathbb{P}\left(\Omega_{n}^C\right)}
\end{align*}

\noindent and we claim that there is a constant $A$ with $\E\lVert R_{\alpha_n}\bar{Y}_n-K^+\hat{y}\rVert^4\le A$ for all $n\in\mathbb{N}$.

\begin{align*}
&\E \lVert R_{\alpha_n}\bar{Y}_n-K^+\hat{y}\rVert^4\\
\le & 4\left(\E\lVert R_{\alpha_n}\bar{Y}_n-R_{\alpha_n}\hat{y}\rVert^4+2\E\left[ \lVert R_{\alpha_n}\bar{Y}_n-R_{\alpha_n}\hat{y}\rVert^2\lVert R_{\alpha_n}\hat{y}-K^+\hat{y}\rVert^2\right]+\E \lVert R_{\alpha_n}\hat{y}-K^+\hat{y}\rVert^4\right)\\
\le &4\left( \E \left[\lVert R_{\alpha_n}\rVert^4 {\delta_n^{true}}^4\right] + 2C^2\E\left[ \lVert R_{\alpha_n}\rVert^2 {\delta_n^{true}}^2\right] +C^4\right)\\
\le & B\left( \E\left[ {\delta_n^{true}}^4/\alpha_n^2\right] + \E\left[ {\delta_n^{true}}^2/\alpha_n\right] + 1\right)
\end{align*}
\noindent for some constant $B$, where we used (\ref{ceq1}) in the second step. First,

\begin{align*}
& \E\left[ {\delta_n^{true}}^4/\alpha_n^2\right]\\
 \le & n^2 \E \lVert \bar{Y}_n-\hat{y}\rVert^4 = n^2\E\left[ \sum_{j,j'\ge 1}\left(\bar{Y}_n-\hat{y},u_j\right)^2\left(\bar{Y}_n-\hat{y},u_{j'}\right)^2\right] \\
   = &\frac{1}{n^2}\left(\sum_{j,j'\ge 1} \sum_{i,i',l,l'=1}^n \E\left[\left(Y_i-\hat{y},u_j\right)\left(Y_l-\hat{y},u_j\right)\left(Y_{i'}-\hat{y},u_{j'}\right)\left(Y_{l'}-\hat{y},u_{j'}\right)\right]\right)\\
   \le &\frac{1}{n^2}\sum_{j,j'\ge 1} \left( n \E\left[ \left(Y_1-\hat{y},u_j\right)^2\left(Y_1-\hat{y},u_{j'}\right)^2\right] + n^2 \E\left[\left(Y_1-\hat{y},u_j\right)^2\right]\E\left[\left(Y_1-\hat{y},u_{j'}\right)^2\right]\right.\\
   &\qquad+ \left.2n^2 \left(\E \left[\left(Y_1-\hat{y},u_j\right)\left(Y_1-\hat{y},u_{j'}\right)\right]\right)^2\right)\\
   \le &\frac{n+2n^2}{n^2}\E\left[ \sum_{j,j'\ge1} \left(Y_1-\hat{y},u_j\right)^2\left(Y_1-\hat{y},u_{j'}\right)^2\right]\\
   &\qquad+\E\left[\sum_{j\ge1}\left(Y_1-\hat{y},u_j\right)^2\right]\E\left[\sum_{j'\ge 1}\left(Y_1-\hat{y},u_{j'}\right)^2\right]\\
     \le &\frac{n+2n^2}{n^2}\E\left[\left( \sum_{j\ge1} \left(Y_1-\hat{y},u_j\right)^2\right)^2\right]+\left(\E\left[\sum_{j\ge 1}\left(Y_1-\hat{y},u_j\right)^2\right]\right)^2\\
     = &\frac{n+2n^2}{n^2}\E\lVert Y_1-\hat{y}\rVert^4 + \left(\E\left[ \lVert Y_1-\hat{y}\rVert^2\right]\right)^2 \le B_1
\end{align*}

\noindent for some constant $B_1$, where in the fourth step we used that the $Y_i$ are i.i.d, that $\E\left(Y_1-\hat{y},u_j\right)=\left(\E [Y_1]-\hat{y},u_j\right)=0$ and that $\E[XY]=\E[X]\E[Y]$ for independent (and integrable) random variables 
(so the relevant cases are the ones where either all indices $i,i',l,l'$ are equal or exactly pairwise two).
Then we used Jensen's inequality in the fifth step. Moreover,\newline $\E\left[ {\delta_n^{true}}^2/\alpha_n\right]\le n \E\left[{\delta_n^{true}}^2\right]= \E \lVert Y_1-\hat{y}\rVert^2=B_2$, so the claim holds for $A=B(B_1+B_2+1)$. 
By Theorem 3 it holds that $\mathbb{P}\left(\Omega_n\right)\to 1$ for $n\to\infty$, thus  $\mathbb{P}\left(\Omega_n^C\right)\le \varepsilon^4/A$ for $n$ large enough and
\begin{align*}
\E \lVert R_{\alpha_n}\bar{Y}_n-K^+\hat{y}\rVert^2 &\le \varepsilon^2 \E[\chi_{\Omega_n}]+\sqrt{\E\lVert R_{\alpha_n}\bar{Y}_n-K^+\hat{y}\rVert^4~\mathbb{P}\left(\Omega_{n}^C\right)} \le 2\varepsilon^2.
\end{align*}
\noindent \qed
\end{proof}

\section{Numerical demonstration}
We conclude with some numerical results.

\subsection{Differentiation of binary option prices}

A natural example is given if the data is acquired by a Monte-Carlo simulation, here we consider an example from mathematical finance. The buyer of a binary call option receives after $T$ days a payoff $Q$, if then a certain stock price $S_T$ is higher then the strike value $K$. Otherwise he gets nothing. Thus the value $V$ of the binary option depends on the expected evolution of the stock price. We denote by $r$ the riskfree rate, for which we could have invested the buying price of the option until the expiry rate $T$. If we already knew today for sure, that the stock price will hit the strike (insider information), we would pay $V=e^{-rT}Q$ for the binary option ($e^{-rT}$ is called discount factor). Otherwise, if we believed that the stock price will hit the strike with probability $p$, we would pay $V=e^{-rT}Qp$. In the Black Scholes model one assumes, that the relative change of the stock price in a short time intervall is normally distributed, that is

\begin{equation*}
S_{t+\delta t}-S_t \sim \mathcal{N}(\mu \delta t,\sigma^2 \delta t).
\end{equation*}

\noindent Under this assumption one can show that (see \cite{hull2016options})

\begin{equation*}
S_T = S_0 e^{sT},
\end{equation*}

\noindent where $S_0$ is the initial stock price and $s \sim\mathcal{N}\left(\mu-\sigma^2/2,\sigma^2/T\right)$. Under this assumptions one has $V=e^{-rT}Q\Phi(d)$, with

\begin{equation*}
\Phi(x):=\frac{1}{\sqrt{2\pi}}\int_{-\infty}^x e^{-\frac{\xi^2}{2}}d\xi,\qquad d=\frac{\log \frac{S_0}{K}+T\left(\mu-\frac{\sigma^2}{2}\right)}{\sigma\sqrt{T}}.
\end{equation*}

\noindent Ultimatively we are interested in the sensitivity of $V$ with respect to the starting stock price $S_0$, that is $\partial V(S_0)/\partial S_0$. We formulate this as the inverse problem of differentiation. Set $\mathcal{X}=\mathcal{Y}=L^2([0,1]=$ and define 

\begin{align*}
 K: &L^2([0,1])\to L^2([0,1])\\
    &f \mapsto Af=g: x \mapsto \int_0^xf(y)dy.
\end{align*}

\noindent Then our true data is $\hat{y}=V=e^{-rT}Q\Phi(d)$. To demonstrate our results we now approximate $V: S_0\mapsto e^{-rT}Qp(S_0)$ through a Monte-Carlo approach. That is we generate independent gaussian random variables $Z_1,Z_2,...$ identically distributed to $s$ and set $Y_i:=e^{-rT}Q \chi_{\{S_0e^{TZ_i}\ge K \}}$.
Then we have $\E Y_i = e^{-rT}Q\mathbb{P}(S_0e^{TZ_i})=e^{-rT}Qp(S_0)=V(S_0)$ and $\E\| Y_i \|^2\le e^{-rT}Q<\infty$. We replace $L^2([0,1])$ with piecewise continuous linear 
splines on a homogeneous grid with $m=50000$ elements (we can calculate $Kg$ exactly for such a spline $g$).  We use in total $n=10000$ random variables for each simulation. As parameters we chose $r=0.0001, T=30, K=0.5,
Q=1, \mu= 0.01, \sigma=0.1$. It is easy to see that $\hat{x} =K^+\hat{y}\in \mathcal{X}_{\nu}$
for all $\nu >0$ using the transformation $z(\xi)=0,5e^{\sqrt{0,3}\xi-0,15}$. Since the qualification of the Tikhonov regularisation is $2$, Theorem \ref{thws} gives an error bound which is asymptotically
proportional to $\left(1/\sqrt{n}\right)^\frac{1}{2}$. In Figure 1 we plot the $L^2$ average of 100 simulations of the discrepancy principle together with the (translated) optimal error bound. In this case the emergency stop did not trigger once - this is plausible, since the true solution is very smooth, which yields comparably higher values of the regularisation parameter and also, the error distribution is Gaussian and the problem is only mildly ill-posed.\\
Let us stress that this is only an academic example to demonstrate the possibility of using our new generic approach in the context of Monte Carlo simulations. Explicit solution formulas for standard binary options are well-known, and for more complex financial derivatives with discontinuous payoff profiles (such as autocallables or Coco-bonds) one would rather resort to stably differentiable Monte Carlo methods (\cite{alm2013monte} or \cite{gerstner2018monte}) or use specific regularization methods for numerical differentiation \cite{hanke2001inverse}.

\begin{figure}\label{risk}
 \includegraphics[width=1\textwidth]{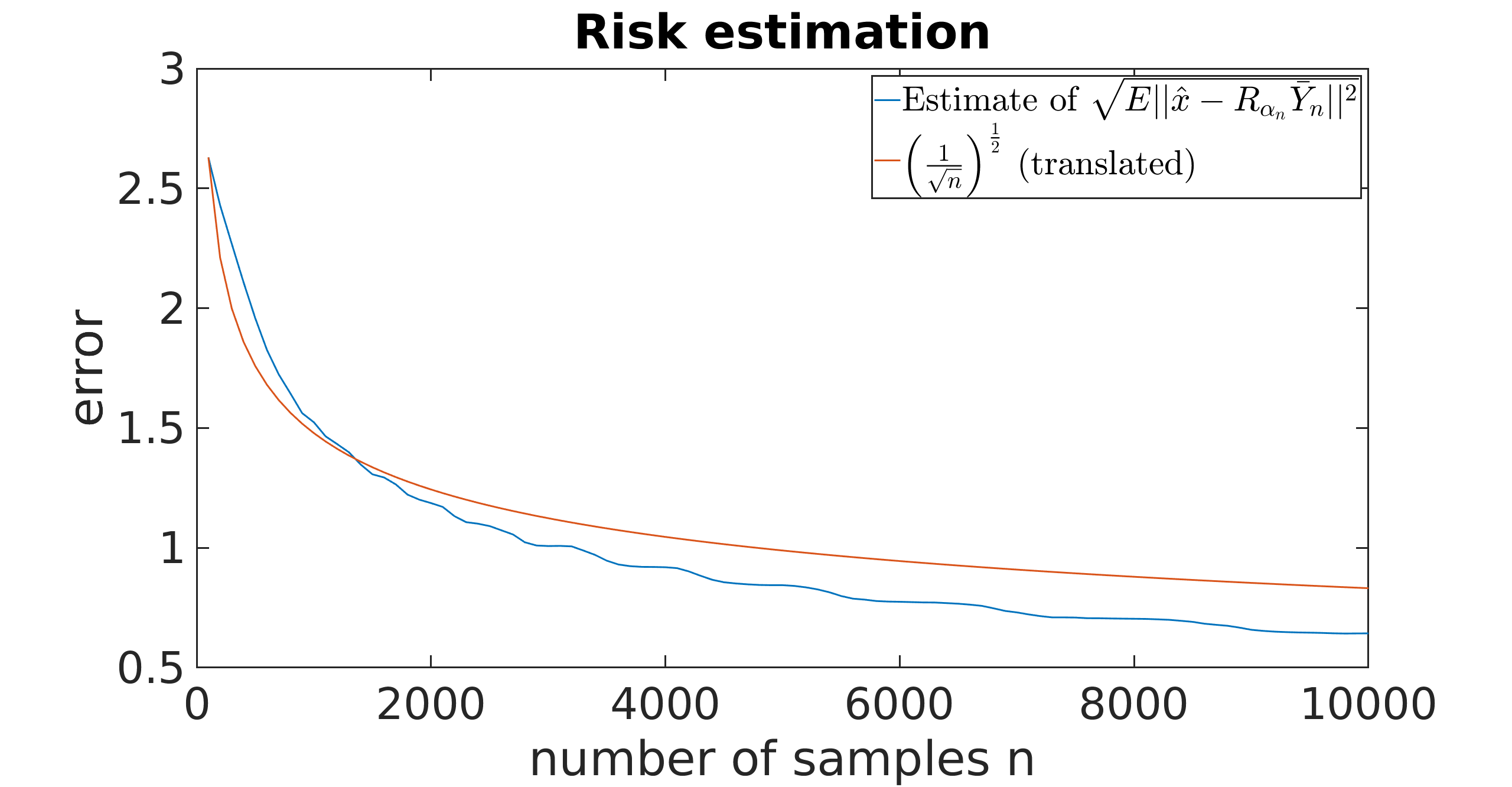}
 \caption{ Estimated Risk of a binary option.}
\end{figure}

\subsection{Inverse heat equation}
We consider the toy problem 'heat' from \cite{hansen2010discrete}. We chose the discretisation level $m=100$ and set $\sigma=0.7$. Under this choice, the last seven singular values (calculated with the function 'csvd') fall below the machine precision of $10^{-16}$.
The discretised large systems of linear equations are solved iteratively using the conjugate gradient method ('pcg' from MATLAB) with a tolerance of $10^{-8}$. As a regularisation method we chose Tikhonov regularisation and we compared the a priori choice $\alpha_n=1/\sqrt{n}$, the discrepancy principle (dp) and the discrepancy principle with emergency stop (dp+es), as implemented in Algorithm 1 with $q=0.7$ and estimated sample variance.
The unbiased i.i.d measurements fullfill $\sqrt{\E \lVert Y_i-\hat{y}\rVert^2}\approx 1.16$ and $\E \lVert Y_i - \E Y_i \rVert^k=\infty$ for $k\ge 3$. Concretely, we chose $Y_i:=\hat{y}+E_i$ with $E_i:=U_i*Z_i*v$, where the $U_i$ are independent and uniformly on $[-1/2,1/2]$ distributed, the $Z_i$ are independent Pareto distributed (MATLAB function 'gprnd' with parameters 1/3, 1/2 and 3/2), and $v$ is a uniform permutation of $1,1/2^\frac{3}{4},...,1/m^\frac{3}{4}$. Thus we chose a rather ill-posed problem together with a heavy-tailed error distribution.
We considered three different sample sizes $n=10^3,10^4,10^5$ with $200$ simulations for each one. The results are presented as boxplots in Figure 3. It is visible, that the results are much more concentrated for a priori regularisation and discrepancy prinicple with emergency stop, indicating the $L^2$ convergence (strictly speaking we do not know if the discrepancy principle with emergency stop converges in $L^2$, since the additional assumption of Corollary \ref{cordisc} is violated here). Moreover the statistics of the discrepancy principle with and without emergency stop become more similiar with increasing sample size - with the crucial difference, that the outliers as such we denote the red crosses above the blue box, thus the cases where the mehod performed badly) are only present in case of the discrepancy principle without emergency stop, causing non-convergence in $L^2$, see Figure 2. Thus here the discrepancy principle with emergency stop is superior to the discrepancy principle without emergency stop, in particular for large sample sizes. Beside that, the error is falling slower in case of the a priori parameter choice.
The number of outliers falls with increasing sample size from $37$ for $n=10^3$  to $18$ for $n=10^5$, indicating the (slow) convergence in probability of the discrepancy principle. Note that $\delta_n^{true}/\delta_n^{est}\approx 1.9$ (in average), if we only consider the runs yielding outliers. This illustrates, that the lack of convergence in $L^2$ is caused by the occasional underestimation of the data error.

\begin{figure}
\caption{Estimated relative $L^2$ error for 'heat', that is $\sqrt{\sum_{t=1}^{200} e_i^2/200}$ where $e_i$ is the relative error $\lVert R_{\alpha_n}\bar{Y}_n-K^+\hat{y}\rVert\rVert/\lVert K^+\hat{y}\rVert$ of the $i$-th run.}
		\begin{tabular}{lrrr}
			         & dp & dp+es & a priori \\
			$n=10^3$    & 572.49     & 0.66 & 0.83\\
			$n=10^4$  & 79.45      & 0.49 & 0.76\\
			$n=10^5$     & 107.19     & 0.31 & 0.69\\
		\end{tabular}
\end{figure}

\begin{figure}
\label{heat1}
	\begin{minipage}{0.49\linewidth}
		\centering
		\includegraphics[width=\linewidth]{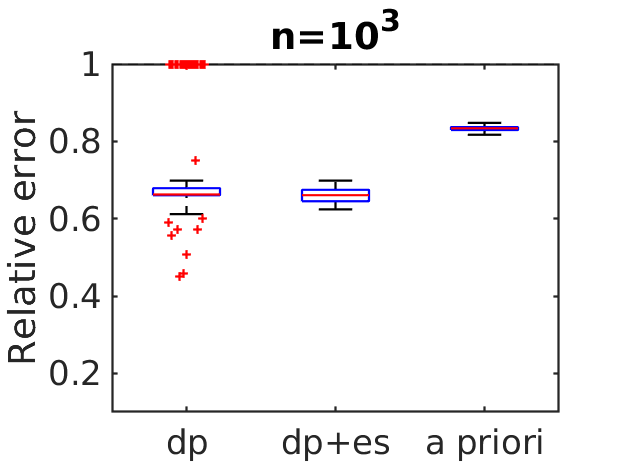}
	\end{minipage}
        \begin{minipage}{0.49\linewidth}
        \includegraphics[width=\linewidth]{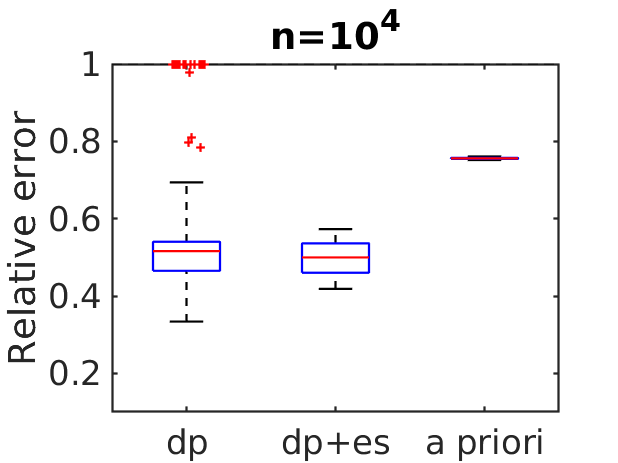}
         \end{minipage}
	\begin{minipage}{0.49\linewidth}
		\includegraphics[width=\linewidth]{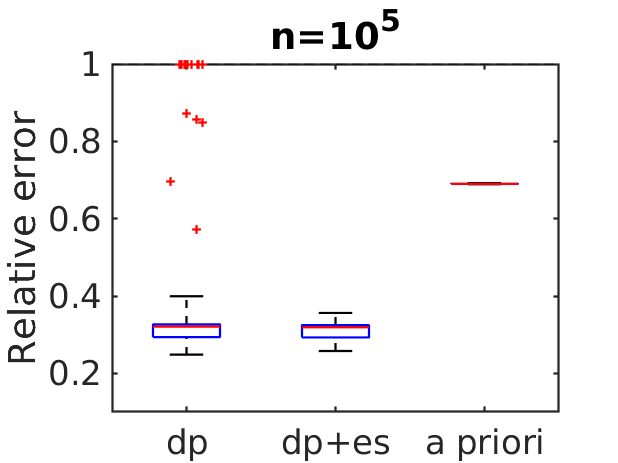}
	\end{minipage}
		%\begin{minipage}{0.5\linewidth}
	%	\captionsetup{labelformat=empty}
		%\caption{Estimated $L^2$ error}
		%		\centering
%\includegraphics[width=1\linewidth]{l2.png}
%	\end{minipage}\hfill
	\caption{Comparison of Tikhonov regularisation with discrepancy principle (dp, Algorithm 1), discrepancy principle with emergency stop (dp+es, Algorithm 1 (optional)) and a priori choice for 'heat'. Boxplots of the relative errors $\lVert R_{\alpha_n}\bar{Y}_n-K^+\hat{y}\rVert/\lVert K^+\hat{y}\rVert$ for 200 simulations with three different sample sizes.}
\end{figure}
\FloatBarrier
\bibliographystyle{spmpsci}
\bibliography{references}

\end{document}